\newtheorem{theorem}{Theorem}[section]
\newtheorem{lemma}[theorem]{Lemma}
\newtheorem{proposition}[theorem]{Proposition}
\newtheorem{remark}[theorem]{Remark}
\numberwithin{equation}{section}
\newcommand{\e}{\varepsilon}
\newcommand{\R}{\mathbb{R}}
\newcommand{\de}{\partial}
\newcommand{\D}{{\mathfrak{D}}}
\newcommand{\dsh}{{2^\sharp}}
\newcommand{\dst}{{2^*}}
\newcommand*{\abs}[1]{\left\vert #1\right\vert}
\newcommand{\beq }{\begin{equation}}
	\newcommand{\eeq }{\end{equation}}
\title[Blow-up phenomena for a boundary Yamabe problem ]{Blow-up phenomena for a boundary Yamabe problem with umbilic boundary}
\author{Giusi Vaira}
\thanks{Work partially supported by 
the MUR-PRIN-P2022YFAJH ``Linear and Nonlinear PDE's: New directions and Applications" and by the INdAM-GNAMPA project ``Fenomeni non lineari: problemi locali e non locali e loro applicazioni",  CUP E5324001950001}
\begin{document}
\date{}
\subjclass[2010]{35B44, 58J32}
\keywords{Prescribed curvature problem, conformal metric,  clustering blow-up point}
 \thanks{}

\maketitle
\begin{abstract}
We consider a linear perturbation of the classical geometric problem of prescribing the scalar and the boundary mean curvature problem in a Riemannian manifold with umbilic boundary provided the Weyl tensor is non-zero everywhere. We will deal with the case of negative scalar curvature showing the existence of a positive solutions when $n\geq 8$. \end{abstract}

\section{Introduction}
One of the most important problems in differential geometry is the so-called prescribed curvature problem, i.e.  {\em given $(M,g)$ be a Riemannian closed manifold of dimension $n\geq3$ and a smooth function $\mathbf{K}:M\to\mathbb R$, finding a metric $\tilde g$ conformal to the original metric $g$ whose scalar curvature is $\mathbf{K}$} (see \cite{SchoenYau_1994,ChenLi_2010,Hebey_2000,KazdanWarner_1974}).\\  As it is well known, being $\tilde g=u^{\frac4{n-2}}g$, this is equivalent to finding a positive solution of the semi-linear elliptic equation:
\begin{align}\label{eq.curvature_problem_no_boundary}
-\frac{4(n-1)}{n-2}\Delta_gu+k_gu=\mathbf{K}u^{\frac{n+2}{n-2}},\qquad~ u>0,\qquad&\text{in}~M,
\end{align}
where $k_g$ denotes the scalar curvature of $M$ with respect to $g$ and $\Delta_g$ is the Beltrami-Laplace operator.

If $M$ is a manifold with boundary, given a smooth function $\mathbf{H}:\partial M\to\mathbb R$, it is natural to ask if there exists a conformal metric whose scalar curvature and boundary mean curvature can be prescribed as $\mathbf{K}$ and $\mathbf{H}$ respectively. As in \eqref{eq.curvature_problem_no_boundary}, the geometric problem turns out to be equivalent to a semi-linear elliptic equation with a Neumann boundary condition:
\begin{align}\label{eq.basicly_original}
\left\{\begin{array}{ll}
-\frac{4(n-1)}{n-2}\Delta_gu+k_gu=\mathbf{K}u^{\frac{n+2}{n-2}},\,\quad u>0,&\text{in }M,\\\\\frac2{n-2}\partial_\nu u+h_gu=\mathbf{H}u^{\frac n{n-2}},&\text{on }\partial M,
\end{array}\right.
\end{align}
where, $k_g$ and $h_g$ denote the scalar and boundary mean curvatures of $M$ with respect to $g$ and $\nu$ is the outward normal unit vector with respect to the metric $g$.

When $\mathbf{K}$ and $\mathbf{H}$ are constants, the problem is known as the Escobar problem, since it was first proposed and studied by Escobar in 1992 in the case $\mathbf{H}=0$ (\cite{Escobar_1992,Escobar_1996}) and in the case $\mathbf{K}=0$ (\cite{Escobar_1992a}). 
Afterwards, many subsequent contributions for this problem are given in \cite{Almaraz, BrendleChen, MayerNdiaye_2017,Marques}.

The case of non-zero constants $\mathbf{K}$ and $\mathbf{H}$ (with $\mathbf{K}>0$) it was first studied by Han \& Li in \cite{HanLi_1999,HanLi_2000} and then it was completed by Chen, Ruan \& Sun in \cite{ChenRuanSun}.\\ In all these results, the existence of solutions for the problem \eqref{eq.basicly_original} strongly depends on the dimension of the manifold, on the properties of the boundary (i.e. being umbilic or not) and on vanishing properties of the Weyl tensor.\\

The case of non-constant functions $\mathbf{K}$ and $\mathbf{H}$ is less studied and all the available results are for special manifolds (tipically the unit ball and the half-sphere). We refer to \cite{Li_1995,Li_1996,AhmedouBenAyed_2021,BenAyedElMehdiOuldAhmedou_2005,BenAyedElMehdiAhmedou_2002} for the case $\mathbf{H}=0$ and to \cite{AbdelhediChtiouiAhmedou_2008,DjadliMalchiodiAhmedou_2004,XuZhang_2016,ChangXuYang_1998} for  the case $\mathbf{K}=0$.\\

When both $\mathbf{K}$ and $\mathbf{H}$ are not constants and not zero, the problem becomes more difficult. Djadli, Malchiodi \& Ahmedou consider problem \eqref{eq.basicly_original} in \cite{DjadliMalchiodiOuldAhmedou_2003} on the three-dimensional half-sphere proving some existence and compactness results. Chen, Ho \& Sun  proved the existence of solutions for \eqref{eq.basicly_original} when $\mathbf{K}$ and $\mathbf{H}$ are negative functions and the boundary $\partial M$ has negative Yamabe invariant (see \cite{ChenHoSun_2018}).
In  \cite{AmbrosettiLiMalchiodi_2002}, Ambrosetti, Li \& Malchiodi considered the perturbation problem in the unit ball when both $\mathbf{K}$ and $\mathbf{H}$ are positive. That is, they consider $\mathbf{K} = \mathbf{K}_0 + \varepsilon \mathcal{K} > 0$ and $\mathbf{H} = \mathbf{H}_0 + \varepsilon \mathcal{H} > 0$, where $\mathbf{K}_0 > 0$, $\mathbf{H}_0 > 0$, $\varepsilon > 0$ is small, and $\mathcal{K}$ and $\mathcal{H}$ are smooth functions. They proved an existence result when $\mathcal{K}$ and $\mathcal{H}$ satisfy some conditions.\\
The first result concerning the case of negative prescribed scalar curvature (namely $\mathbf{K}<0$) is due to Cruz-Bl\'azquez, Malchiodi \& Ruiz in \cite{CruzMalchiodiRuiz}. They introduce the scaling invariant
quantity $$\mathfrak D:=\sqrt{n(n-1)}\frac{H(p)}{\sqrt{|\mathbf K(p)|}},\quad p\in\partial M$$ and established the existence of a solution to \eqref{eq.basicly_original} whenever $\mathfrak D<1$ along the whole boundary. On the other hand, if $\mathfrak D>1$ at some boundary points, they got a solution only in a three dimensional manifold, for a generic choice of $\mathbf K$ and $\mathbf H$.\\ Their proof relies on a careful blow-up analysis: first they show that the blow-up phenomena occurs
at boundary points $p$ with $\mathfrak D\geq1$, with different behaviours depending on whether $\mathfrak D=1$ or $\mathfrak D>1$. To deal with the loss of compactness at points with $\mathfrak D>1$, where bubbling of solutions occurs, it is shown that in dimension three all the blow-up points are isolated and simple. As a consequence, the number of blow-up points is finite and the blow-up is excluded via integral estimates. In that regard, $n= 3$ is the maximal dimension for which one
can prove that the blow-up points with $\mathfrak D>1$ are isolated and simple for generic choices of $\mathbf K$ and $\mathbf H$. In the closed case such a property is assured up to dimension four (see \cite{Li_1996}) but, as observed in \cite{DjadliMalchiodiOuldAhmedou_2003}, the
presence of the boundary produces a stronger interaction of the bubbling solutions with the function $\mathbf K$.\\

Afterwards, in \cite{BattagliaCruz-BlazquezPistoia_2023}, the authors considered the perturbation problem in the ball under the condition $\mathbf{K}<0$ and $\mathbf{H} > 0$. i.e., $\mathbf{K} = \mathbf{K}_0 + \varepsilon \mathcal{K} < 0$ and $\mathbf{H} = \mathbf{H}_0 + \varepsilon \mathcal{H} > 0$, where $\mathbf{K}_0 < 0$, $\mathbf{H}_0 > 0$, $\varepsilon > 0$ is small, and $\mathcal{K}$ and $\mathcal{H}$ are smooth functions showing the existence of solutions with some constraint of $\mathcal{K}$ and $\mathcal{H}$.\\
Recently, in \cite{BPV} it is consider problem 
 \eqref{eq.basicly_original} in the unit ball showing the existence of infinitely many non-radial solutions under some suitable assumptions on the functions $\mathbf K$ and $\mathbf H$ (see also \cite{WeiYan_2010a}) for the closed case, \cite{WangZhao_2013} and \cite{BianChenYang} for $\mathbf K=0$ and $\mathbf H>0$). \\ The existence in the general case in dimension $n\geq 4$ is not known at the moment since the difficulties that arise in order to prove the compactness condition. 
In \cite{Cruz-BlazquezPistoiaVaira_2022} and in 
\cite{Cruz-BlazquezVaira_2025} it is studied a linear perturbation of the geometric problem \eqref{eq.basicly_original}, namely 
\begin{equation}\label{pb}
\left\{\begin{aligned}&-\frac{4(n-1)}{n-2}\Delta_g v +k_g v =\mathbf K v^{\frac{n+2}{n-2}}\quad&\mbox{in}\,\, M\\
&\frac{2}{n-2}\frac{\partial v}{\partial \nu}+h_g v +\varepsilon \gamma v= \mathbf H v^{\frac{n}{n-2}}\quad &\mbox{on}\,\, \partial M.\end{aligned}\right.\end{equation}
where $\varepsilon$ is a small and positive parameter and $\gamma$ is a given smooth function. By using the Escobar metric (namely by letting $g$ so that $h_g=0$) then, in \cite{Cruz-BlazquezPistoiaVaira_2022}, it is shown the existence of a clustering type solutions for problem \eqref{pb} in the case in which $\mathbf K$ and $\mathbf H$ are constants, $4\leq n\leq 7$ and $\gamma=1$, while, in \cite{Cruz-BlazquezVaira_2025}, it is proved the existence of a blowing-up solution when $\mathbf K$ and $\mathbf H$ are not constants, $n\geq 4$ and $\gamma=1$.\\ Here we continue the study of the problem \eqref{pb} when the manifold $M$ has an umbilic boundary and we want to show the existence of a blowing up solution also in this case. Here we let
\begin{itemize}
\item[${\rm (Hyp_1)}$] 
$\mathbf {K}, \mathbf{H}$ are sufficiently regular functions such that
$\mathbf K<0$,
$\mathbf H$ is of arbitrary sign and there exists
$p\in \de M$ with
$\D>1$. 
\item [${\rm (Hyp_2)}$] 
$p$ is a common local minimum point 
which is non-degenerate, i.e.,
$\nabla \mathbf K(p)=\nabla \mathbf H(p)=0$ and
$D^2 \mathbf H(p)$ and
$D^2\mathbf K(p)$ are positive definite.
\end{itemize} We remark that, if $\mathbf K$ and $\mathbf H$ are constants, ${\rm (Hyp_1)}$ means only that $\mathbf K<0$ and $\mathbf H>0$ are such that $\D>1$.\\ 
The main result of the paper is stated as follows.
\begin{theorem}\label{principale}
Let $(M, g)$ be a smooth, $n-$ dimensional Riemannian manifold of positive type with regular umbilic boundary $\partial M$. Suppose $n\geq 8$ and that the Weyl tensor is not zero everywhere on $\partial M$.  Assume ${\rm (Hyp_1)}$.\\ If $\mathbf K$ and $\mathbf H$ are constants we let
 $\gamma: M\to \mathbb R$ a smooth function, $\gamma>0$ on $\partial M$,  while if $\mathbf K$ and $\mathbf H$ are not constants we let $\gamma=1$ and we assume ${\rm (Hyp_2)}$.\\ Then, for $\varepsilon>0$ small there exists a positive solution $u_\e$ that blows up at a point $p\in\partial M$ as $\e\to 0$.   
 \end{theorem}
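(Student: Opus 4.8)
The plan is to use a Lyapunov–Schmidt finite-dimensional reduction, constructing an approximate solution as a single bubble modeled on the standard solution of the critical problem on the half-space, suitably corrected, and then solving the reduced problem by a variational/topological argument.

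First I would set up the functional-analytic framework. Writing the problem in conformal Fermi coordinates centered at a boundary point $\xi \in \partial M$, I would use the standard bubbles
$U_{\delta,\xi}(x) \approx \left(\tfrac{\delta}{\delta^2 + |x-\xi|^2}\right)^{\frac{n-2}{2}}$,
which solve the limiting equation $-\Delta U = n(n-2) U^{\frac{n+2}{n-2}}$ on $\mathbb{R}^n_+$ with a Neumann-type condition capturing the interaction of $\mathbf{K}$ and $\mathbf{H}$ through $\mathfrak D$. Since we are in the regime $\mathfrak D > 1$ with $\mathbf K<0$, the relevant bubble is the one with a nontrivial boundary profile (half-sphere of geodesic curvature determined by $\mathfrak D$), so I would recall/construct the family $\mathcal{W}_{p,\delta}$ already prepared in the earlier part of the paper. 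The energy space is $H^1(M)$ with the quadratic form $\frac{4(n-1)}{n-2}\|\nabla\cdot\|^2 + \int k_g (\cdot)^2 + \int_{\partial M}(2h_g + \e\gamma)(\cdot)^2$; positivity of type of $(M,g)$ ensures coercivity. I would then decompose $u = \mathcal{W}_{p,\delta} + \phi$ with $\phi$ orthogonal to the approximate kernel spanned by $\partial_\delta \mathcal{W}$ and $\partial_{\xi_i}\mathcal{W}$, and invoke the invertibility of the linearized operator on this orthogonal complement (a standard consequence of the nondegeneracy of the standard bubble, modulo the conformal kernel), obtaining via contraction mapping a unique small $\phi = \phi_{\delta,\xi}$ with quantitative bound on its norm in terms of $\e$, $\delta$, and the curvature error terms.

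Next I would carry out the energy expansion. Plugging $\mathcal{W}_{p,\delta} + \phi_{\delta,\xi}$ into the Euler–Lagrange functional, the reduced energy is
$J_\e(\delta,\xi) = c_0 + \e \, \delta \, \Psi(\xi) + \text{(Weyl term)}\,\delta^2 |W(\xi)|^2 \log\tfrac1\delta \;(\text{or }\delta^2) + o(\cdots)$,
where the $\e\delta$ term comes from the linear perturbation $\e\gamma$ and (in the non-constant case) the expansion of $\mathbf K$, $\mathbf H$ near their critical point, while the dimension hypothesis $n \ge 8$ is exactly what makes the Weyl-tensor contribution $\sim \delta^2|W(\xi)|^2$ dominate the metric-distortion error terms and appear with a definite (negative) sign, pushing $\delta$ toward a strictly positive critical value. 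Balancing $\e\delta$ against $\delta^2|W(\xi)|^2$ gives $\delta \sim \e$ at the critical point. I expect the main obstacle to be precisely this energy expansion: controlling the error terms coming from the non-flatness of $g$ and the boundary second fundamental form (vanishing to the right order because $\partial M$ is umbilic) with enough precision to isolate the $|W|^2$-term, and checking that the $\phi$-correction contributes only to higher order — this is where umbilicity of the boundary and $n\ge 8$ are used in an essential way, since for $n \le 7$ the expansion would have a different leading structure (cf. the earlier works for $4\le n\le 7$).

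Finally I would solve the reduced problem. In the constant-curvature case, $\Psi(\xi)$ is (up to positive constant) $\gamma(\xi)$ restricted to $\partial M$ together with the Weyl factor $|W(\xi)|^2$; since $\gamma>0$ and $|W|\neq 0$ everywhere, the reduced function of $(\delta,\xi)$ has a stable critical point — e.g. a minimum of $\e\delta\,\gamma(\xi)+ $ (negative Weyl term) over a suitable compact region, or one argues via the Weyl product $|W(\xi)|^2$ having a max — and $\partial M$ being compact guarantees it is attained. In the non-constant case with $\gamma=1$, ${\rm (Hyp_2)}$ furnishes a nondegenerate common local minimum $p$ of $\mathbf K,\mathbf H$, so the quadratic expansion of the curvature terms gives a nondegenerate critical point of the reduced energy in $\xi$ near $p$, and minimization in $\delta$ completes the picture. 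A critical point $(\delta_\e,\xi_\e)$ of $J_\e$ yields, by the standard reduction lemma, a genuine solution $u_\e = \mathcal{W}_{p,\delta_\e,\xi_\e} + \phi_{\delta_\e,\xi_\e}$ of \eqref{pb}; positivity follows from the maximum principle (or from working in the cone of positive functions throughout), and $\delta_\e \to 0$ forces $u_\e$ to blow up at $\xi_\e \to p$, completing the proof.
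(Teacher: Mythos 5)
Your overall strategy (single--bubble Lyapunov--Schmidt reduction with the $\mathfrak D>1$ bubble, expansion of the reduced energy, stable critical point in $(\delta,p)$) is indeed the paper's strategy, but the quantitative core of the argument is wrong or missing, and it is exactly where the umbilicity hypothesis acts. Since $\partial M$ is umbilic, the expansion is carried out in Marques' conformal Fermi coordinates (Remark \ref{umbilic}), in which $\det\tilde g_p=1+\mathcal O(|y|^n)$, $\tilde h_{ij}=o(|y|^3)$ and $k_{\tilde g_p}$ vanishes to second order with $\partial^2_{ii}k_{\tilde g_p}(p)=-\tfrac16|\overline{{\rm Weyl}_g}(p)|^2$. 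Consequently there is \emph{no} curvature term of order $\delta^2$ (nor $\delta^2\log\tfrac1\delta$) in the reduced energy: when $\mathbf K,\mathbf H$ are constants the geometric contribution only enters at order $\delta^4$ (Lemma \ref{redf}), so the correct balance is $\varepsilon\delta$ against $\delta^4$ and the concentration rate is $\delta\sim\varepsilon^{1/3}$, not $\delta\sim\varepsilon$; when $\mathbf K,\mathbf H$ are not constants the $\delta^2$ coefficient $\mathtt B(p)$ is built from $D^2\mathbf K(p)$ and $D^2\mathbf H(p)$ through ${\rm (Hyp_2)}$ --- the Weyl tensor plays no role in that case --- and the balance $\varepsilon\delta$ vs.\ $\delta^2$ gives $\delta\sim\varepsilon$, with the $p$-localization at leading order coming from $\mathfrak E(p)$ itself. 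Your proposed expansion $\varepsilon\delta\,\Psi(\xi)+(\hbox{Weyl})\,\delta^2|W(\xi)|^2\log\tfrac1\delta$ conflates the two cases and is incompatible with the umbilic metric expansion; an argument built on it would not close.

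The second genuine gap is the correction term. In the constant case the bubble alone is not an adequate approximate solution: one must add $\delta^2\mathcal V_{\delta,p}$, where $V_p$ solves the linear problem \eqref{LEp} with datum $\mathtt E_p$, and solvability requires verifying that $\mathtt E_p$ is orthogonal to the kernel functions $\mathfrak j_1,\dots,\mathfrak j_n$ (Proposition \ref{vp}); only with this correction does the error drop to $\mathcal O(\delta^3+\varepsilon\delta)$ (Lemma \ref{errore}), which is what permits the $\delta^4$ coefficient $\mathtt B(p)$ --- whose positivity uses property (iv) of $V_p$ together with $\overline{{\rm Weyl}_g}(p)\neq0$ --- to govern the reduced problem. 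Your phrase ``suitably corrected'' never identifies this step, and with your claimed orders the remainder and the $\phi$-contribution would be comparable to the leading geometric term. Relatedly, $n\geq8$ is not ``what makes the $\delta^2$ Weyl term dominate'': it is the decay/integrability threshold (recall $|V_p|\lesssim(1+|x|)^{4-n}$, the $L^{\frac{2n}{n+2}}$ and $L^{\frac{2(n-1)}{n}}$ estimates in Lemma \ref{errore}, and the quadratic Taylor bound for $\mathfrak g$) needed for the error estimates and the $\mathcal O(\delta^5)$ remainders. Finally, the limit profile here is $U=\alpha_n|\mathbf K(p)|^{-\frac{n-2}{4}}\big(|\tilde x|^2+(x_n+\mathfrak D)^2-1\big)^{-\frac{n-2}{2}}$, not the Aubin--Talenti bubble you first write down; you correct this implicitly, but all expansions (and the integral quantities $\varphi_m$, $\hat\varphi_m$) must be computed with this bubble.
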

Let us make some comments.
\begin{itemize} 
\item The proof of Theorem \ref{principale} relies on a finite dimensional Lyapunov-Schmidt reduction method. Here the main difficulty is due to the fact that the umbilicity of the boundary forces to consider higher order expansion in the metric $g$ that, together with a different kind of bubble, makes the computations so hard. Moreover, when $\mathbf K$ and $\mathbf H$ are constants we also need to correct the main part of the ansatz by adding a function $V_p$ (given in Proposition \ref{vp}) in order to have a good error. We remark that, when $\mathbf K$ and $\mathbf H$ are not constants, then it is not needed the correction and this is a great difference with respect to the non-umbilic case (the result contained in \cite{Cruz-BlazquezVaira_2025} with $\mathbf K$ and $\mathbf H$ not constants need the correction which is different from $V_p$) .
\item The result provide the exact location of the blow-up point when $\mathbf K$ and $\mathbf H$ are not constants. Indeed, in this case, the point is the common non-degenerate critical point. Here $\gamma$ can be taken equal to $1$ since it does'n have any role. When $\mathbf K$ and $\mathbf H$ are constants the situation is a little bit complicated and it is not possible to give the precise location of the point in which the blow-up occurs although we strongly believe that the geometric function which is responsable of the existence of the blowing-up solution is the Weyl tensor on the boundary of the manifold. However, to capture the geometry of the manifold, we need to have an explicit form of the function $V_p$ given in Proposition \ref{vp} which is far from being possible.
\item In \cite{GMP} it was considered the problem with $\mathbf K=0$. We remark that even if one can think that these problems are similar, the form of the bubble, namely the classification result for the limit problem, makes the computations  completely different. 
\end{itemize}	
The structure of the paper is the following: first, in Section \ref{preliminari} we collect some useful notations and results, then, in Section \ref{ansatz} we find a good approximated solution. Next we reduce the problem into a finite dimensional one (see Section \ref{riduzione}) and finally, in Section \ref{ridotto}, we study the reduced problem and we prove the Theorem \ref{principale}.  

\section{Preliminaries and Variational Framework}\label{preliminari}
{\it Notations:} Here we collect our main notations. \\ We will use the indices $1\leq i, j, k, m, p, r, s\leq n-1$ and $1\leq a, b, c, d\leq n$. \\ We denote by $g$ the Riemannian metric, by $R_{abcd}$ the full Riemannian curvature tensor, by $R_{ab}$ the Ricci tensor and by $k_g$ the scalar curvature of $(M, g)$. Moreover the Weyl tensor of $(M, g)$ will be denoted by ${\rm Weyl}_g$.\\ Let $(h_{ij})_{ij}(p)$ be the tensor of the second fundamental form in a point $p\in\partial M$. We recall that the boundary $\partial M$ is umbilic (namely composed only of umbilic points) when, for all $p\in\partial M$, $h_{ij}(p)=0$ for all $i\neq j$ and $h_{ii}(p)=h_g(p)$ where $h_g(p)$ is the mean curvature of $\partial M$ at the point $p$.\\
The bar over an object (e.g. $\overline{{\rm Weyl}_g}$) will mean the restriction to this object to the metric of $\partial M$. We will often use the notation $$\mathcal L_g:=-\frac{4(n-1)}{(n-2)}\Delta_g+k_g,\quad \mathcal B_g:=\frac{\partial}{\partial\nu}+\frac{n-2}{2}h_g$$ to denote the conformal laplacian and the conformal boundary operator respectively.\\
When we derive a tensor, e.g. $T_{ij}$, with respect to a coordinate $y_\ell$ we use the usual shortened notation $T_{ij, \ell}$ for $\frac{\partial }{\partial y_\ell}T_{ij}$.\\ Finally, for a tensor $T$ and a number $q\in\mathbb N$, we use $${\rm Sym}_{i_1\ldots i_g}T_{i_1\ldots i_q}=\frac{1}{q!}\sum_{\sigma\in \mathtt S_q}T_{i_{\sigma(1)}\ldots i_{\sigma(q)}}$$ being $\mathtt S_q$ the group of all permutations of $q$ elements.
\begin{remark}\label{umbilic}
Since $\partial M$ is umbilic for any $p\in\partial M$ there exists a metric $\tilde g_p=\tilde g$ conformal to $g$, namely $\tilde g_p=\Lambda^{\frac{4}{n-2}}g$ such that 
\begin{equation}\label{detg}
|{\rm det }\tilde g_p(y)|=1+\mathcal O(|y|^n)
\end{equation}\\
\begin{equation}\label{hij}
|\tilde h_{ij}(y)|=o(|y|^3)
\end{equation}
\\
\begin{equation}\label{gij}
\begin{aligned}
\tilde g^{ij}(y)&=\delta_{ij}+\frac 13 \bar R_{ikj\ell}y_k y_\ell + R_{ninj}y_n^2\\
&+\frac 16 \bar R_{ikj\ell, m}y_ky_\ell y_m +R_{ninj, k}y_n^2 y_k+\frac 13 R_{ninj, n}y_n^3\\
&+\left(\frac{1}{20} \bar R_{ikj\ell, mp}+\frac{1}{15}\bar R_{iks\ell}\bar R_{jmsp}\right)y_ky_\ell y_m y_p\\
&+\left(\frac 12 R_{ninj, k\ell}+\frac 13{\rm Sym}_{ij}(\bar R_{iks\ell}R_{nsnj})\right)y_n^2y_ky_\ell\\
&+\frac 13 R_{ninj, nk}y_n^3y_k+\frac{1}{12}(R_{ninj, nn}+8R_{nins}R_{nsnj})y_n^4+\mathcal O(|y|^5)\end{aligned}\end{equation}\\
\begin{equation}\label{barSg}
|\bar{k}_{\tilde g_p}(y)|=\mathcal O(|y|^2)\quad\hbox{and}\quad \partial_{ii}^2 \bar{k}_{\tilde g_p}(p)=-\frac 16 |\overline{{\rm Weyl}_g}(p)|^2
\end{equation}\\
\begin{equation}\label{Sg}
{k}_{\tilde g_p}(p)=0; \, k_{\tilde g_p,a}(p)=0, \quad\hbox{and}\quad \partial_{ii}^2 {k}_{\tilde g_p}(p)=-\frac 16 |\overline{{\rm Weyl}_g}(p)|^2
\end{equation}\\

\begin{equation}\label{Rij}
\bar R_{k\ell}(p)=R_{nn}(p)=R_{nk}(p)=0
\end{equation}
uniformly with respect to $p\in\partial M$ and $y\in T_pM$. Also $\Lambda_p(p)=1$ and $\nabla \Lambda_p(p)=0$. These results are contained in \cite{Marques}.
\end{remark} 
The conformal laplacian and the conformal boundary operator transform under the change of metric $\tilde g_p=\Lambda_p^{\frac{4}{n-2}}g$ in the following way:
$$\begin{aligned}&\mathcal L_{\tilde g_p}\varphi=\Lambda_p^{-\frac{n+2}{n-2}}\mathcal L_g(\Lambda_p \varphi)\\ &\mathcal B_{\tilde g_p}\varphi=\Lambda_p^{-\frac{n}{n-2}}\mathcal B_g(\Lambda_p \varphi).\end{aligned}$$ Then we can rewrite our inital problem \eqref{pb} in the following way: let $v:=\Lambda_p u$. Then $v$ is a positive solution of \eqref{pb} if and only if $u$ is a positive solution of 
\begin{equation}\label{pb1}
\left\{\begin{aligned}&\mathcal L_{\tilde g_p}u =\mathbf K u^{\frac{n+2}{n-2}}\quad&\mbox{in}\,\, M\\
&\mathcal B_g u+\varepsilon \Lambda_p^{-\frac{2}{n-2}}\gamma u= \mathbf H u^{\frac{n}{n-2}}\quad &\mbox{on}\,\, \partial M.\end{aligned}\right.\end{equation}
From now on we set $\tilde\gamma:=\Lambda_p^{-\frac{2}{n-2}}\gamma $.\\
We endow the Sobolev space
$H^1_g(M)=H^1(M)$ the equivalent scalar product
$$\langle u, v\rangle_{g} :=\int_M 
( c_n \nabla_g u\nabla_g v + k_g uv ) \, d\nu_g+2(n-1)\int_{\partial M}h_g uv\, d\sigma_g,$$ where
$d\nu_g$ is the volume element of the 
manifold, $d\sigma_g$ is the volume element of the boundary
and
$c_n:=\frac{4(n-1)}{(n-2)}$. This scalar 
product induces a norm in
$H^1(M)$ which is equivalent to the 
standard one, and that we denote by
$\|\cdot\|_{g}$. We remark also that $\Lambda_p$ is an isometry in the sense that for any $u, v\in H^1(M)$ $$\langle \Lambda_p u, \Lambda_p v\rangle_g =\langle u, v\rangle_{\tilde g_p}$$ and, consequently $$\|\Lambda_p u\|_g=\|u\|_{\tilde g_p}.$$
Moreover, for any
$u\in L^q(M)$ and
$v\in L^q(\partial M)$, we put
$$
\|u\|_{L^q(M)}:=
\Big ( \int_M |u|^q\, d\nu_g
\Big ) ^{\frac 1 q} 
\quad\text{and}
\quad \|v\|_{L^q(\partial M)}:=
\Big ( \int_{\partial M}|v|^q\, d\sigma_g
\Big ) ^{\frac 1q}.
$$ 
 For notational convenience, 
we will often omit the volume or surface 
elements in integrals. 
 \\

We have the well-known embedding continuous maps
\begin{align*}
&\mathfrak i_{\partial M}: 
H^1(M)\to L^{\dsh}(\partial M),\qquad\qquad 
& \mathfrak i_M: H^1(M) \to L^{\dst}(M),\\ 
&\mathfrak i^*_{\partial M}: L^{\frac{2(n-1)}{n}}
(\partial M)\to H^1(M),
\qquad\qquad & \mathfrak i^*_M: 
L^{\frac{2n}{n+2}}(M)\to H^1(M),
\end{align*}
where
$\dst=\frac{2n}{n-2}$ and
$\dsh = \frac{2(n-1)}{n-2}$ 
denote the critical Sobolev exponents for
$M$ and
$\de M$, respectively.
 Now, given
$\mathfrak f\in L^{\frac{2(n-1)}{n}}(\partial M)$,
 the function
$w_1=\mathfrak i^*_{\partial M}(\mathfrak f)$ 
in
$H^1_g(M)$ is defined as the unique solution of 
the equation 
\begin{equation*}
 \begin{cases}
 \displaystyle
 \mathcal L_g w_1=0
  &\mbox{in}\,\, M,\\[6pt]
   \displaystyle
 \mathcal B_g w_1
=\mathfrak f
  &\mbox{on}\,\, \partial M.
\end{cases}
\end{equation*}
Similarly, if we let
$\mathfrak g\in L^{\frac{2n}{n+2}}(M)$,
$w_2=\mathfrak i^*_M(\mathfrak g)$ 
denotes the unique solution of the equation
\begin{equation*}
 \begin{cases}
  \displaystyle
\mathcal L_g w_2
=\mathfrak g
  &\mbox{in}\,\, M,\\[6pt]
   \displaystyle
\mathcal B_g w_2=0
  &\mbox{on}\,\, \partial M.
\end{cases}
\end{equation*}
By continuity of
$\mathfrak i_M$ and
$\mathfrak i_{\partial M}$,
 we get
$$
\|\mathfrak i^*_{\partial M}
(\mathfrak f)\|_{g}
\leq C_1 \|\mathfrak f\|_{L^{\frac{2(n-1)}{n}}
(\partial M)}
\quad\text{and}\quad 
\|\mathfrak i^*_M(\mathfrak g)\|_{g}
\leq C_2 \|\mathfrak g\|_{L^{\frac{2n}{n+2}}(M)},
$$ 
for some
$C_1>0$ and independent of
$\mathfrak f$ and some
$C_2>0$ and independent of
$\mathfrak g$. 
Then, we are able to rewrite the 
problem \eqref{pb} as 
\begin{equation}
\label{pb1} 
v=\mathfrak i^*_M(\mathbf K\mathfrak g(v))
+\mathfrak i^*_{\partial M}
\Big ( \frac{n-2}{2}
\Big ( \mathbf H \mathfrak f(v)-\varepsilon \gamma v
\Big )  
\Big ) ,
\end{equation} 
where we set
$\mathfrak g(v):=(v^+)^{\frac{n+2}{n-2}}$ and
$\mathfrak f(v)=(v^+)^{\frac{n}{n-2}}$.

We also define the energy
$J_{\varepsilon, g}: H^1(M)\to\mathbb R$ associated to 
\begin{equation}
\label{energia}
\begin{split}
J_{\varepsilon, g}(v)
 : & =\int_M 
\Big ( \frac{c_n}{2}|\nabla_g v|^2
+\frac 12k_g v^2 -\mathbf K \mathfrak G(v)
\Big ) \,d\nu_g +(n-1) \int_{\partial M} h_g v^2\, d\sigma_g\\
& -c_n\frac{n-2}{2}
\int_{\partial M}\mathbf H 
\mathfrak F(v)\, d\sigma_g
 +(n-1)
\varepsilon\int_{\partial M}\gamma
v^2\,d\sigma_g,
 \end{split}
 \end{equation}
being
$$
\mathfrak G(s)=\int_0^s \mathfrak g(t)\, dt,
\qquad 
\mathfrak F(s)=\int_0^s \mathfrak f(t)\,dt.
$$ 
Notice that, if we define
\begin{equation}
\label{energia1}
\begin{split}
\tilde J_{\varepsilon, \tilde g_p}(u)
 : & =\int_M 
\Big ( \frac{c_n}{2}|\nabla_{\tilde g_p} u|^2
+\frac 12 k_{\tilde g_p} u^2 -\mathbf K \mathfrak G(u)
\Big ) \,d\nu_{\tilde g_p} +(n-1) \int_{\partial M} h_{\tilde g_p} u^2\, d\sigma_{\tilde g_p}\\
& -c_n\frac{n-2}{2}
\int_{\partial M} \mathbf H 
\mathfrak F(u)\, d\sigma_{\tilde g_p}
 +(n-1)
\varepsilon\int_{\partial M}\tilde\gamma
u^2\,d\sigma_{\tilde g_p},
 \end{split}
 \end{equation}
 then we have \begin{equation}\label{relenergia}
 J_{\varepsilon, g}(\Lambda_p u)=\tilde J_{\varepsilon, \tilde g_p}(u).\end{equation}

 Now we introduce some integral quantities that will appear in our computations: let
\begin{equation*}I_m^\alpha:=\int_0^{+\infty}\frac{\rho^\alpha}{(1+\rho^2)^m}\, d\rho,\quad \hbox{with}\, \alpha+1<2m\end{equation*} It is useful to recall the following relations:
\begin{equation}\label{prop}
\begin{split} 
I_n^n=\frac{n-3}{n+1}I_n^{n+2}, \quad I_{n-2}^{n-2}=\frac{4(n-2)}{n+1}I_n^{n+2}.\end{split} 
\end{equation}
Moreover, for $p\in\partial M$ with $\D(p)>1$, we denote by
\begin{equation*}
	\varphi_m(p):=\int_{\D}^{+\infty}\frac{1}{(t^2-1)^m}\, dt;\quad {\hat\varphi}_m(p):=\int_{\D}^{+\infty}\frac{(t-\D)^2}{(t^2-1)^m}\, dt; \quad {\tilde\varphi}_m(p):=\int_{\D}^{+\infty}\frac{(t-\D)^4}{(t^2-1)^m}\, dt .
\end{equation*}

\medskip

Here and in the sequel we agree that $f\lesssim g$  means $|f|\leq C |g| $ for some positive constant $c$ which is independent on $f$ and $g$ and $f\sim g$  means $f= g(1+o(1))$. We use the letter $C$ to denote a positive constant that may change from line to line.

\section{The ansatz}\label{ansatz}

We want to find a solution $u$ of the problem \eqref{pb1} by a finite dimensional reduction.\\
The main ingredient to 
cook up our solution is the so-called 
\textit{bubble}, 
whose expression is given by
\begin{equation*}
U_{\delta, x_0(\delta)}(x):=
\frac{\alpha_n}{|\mathbf K(p)|^{\frac{n-2}{4}}}
\frac{\delta^{\frac{n-2}{2}}}{
\big ( |x-x_0(\delta)|^2-\delta^2
\big ) ^{\frac{n-2}{2}}}
\end{equation*} 
where
$\alpha_n:=
\big ( 4n(n-1)
\big ) ^{\frac{n-2}{4}}$,
$x_0(\delta):=
(\tilde x_0,-\D\delta) 
 \in \mathbb R^n$,
$\tilde x_0\in \R^{n-1}$ 
and
$\delta>0$. When
$\D>1$, the
$n-$dimensional family of 
functions defined above describe all the 
solutions to the following problem in
$\R^n_+$ (see
\cite{ChipotShafrirFila_1996}):
\begin{equation}\label{pblim}
\begin{cases}
\displaystyle
-c_n \Delta U =-|\mathbf K(p)|U^{\frac{n+2}{n-2}}
\quad 
&\mbox{in}\,\, \mathbb R^n_+
\\[6pt]
\displaystyle
\frac{2}{n-2}\frac{\partial U}{\partial\nu}
=\mathbf  H(p) U^{\frac{n}{n-2}}\quad &\mbox{on}\,\,
\partial\mathbb R^n_+.
\end{cases}
\end{equation}
We set \begin{equation}\label{U}U(x)=
U_{1, x_0(1)}(\tilde x, x_n)=\frac{\alpha_n}
{|\mathbf  K(p)|^{\frac{n-2}{4}}}\frac{1}{
\big ( |\tilde x|^2+(x_n+\mathfrak D)^2-1
\big ) ^{\frac{n-2}{2}}},\end{equation} where
$\tilde x=(x_1, \ldots, x_{n-1})\in \mathbb R^{n-1}$
 and
$x_n>0$. Moreover $\alpha_n:=\left(4n(n-1)\right)^{\frac{n-2}{4}}$.\\

We also need to introduce the linear problem 
\begin{equation}\label{lineare}
\begin{cases}
\displaystyle
-c_n\Delta v +|\mathbf  K(p)|\frac{n+2}{n-2}U^{\frac{4}{n-2}}v=0
\quad &\mbox{in}\,\, \mathbb R^n_+\\[6pt]
\displaystyle 
\frac{2}{n-2} \frac{\partial v}{\partial\nu}-\frac{n}{n-2}\mathbf  H(p)U^{\frac{n}{n-2}}v=0
\quad &\mbox{on}\,\, \partial\mathbb R^n_+.
\end{cases}
\end{equation}
In Section 2 of
\cite{Cruz-BlazquezPistoiaVaira_2022} we have shown that the
$n-$ dimensional space of solutions of \eqref{lineare} 
 is generated by the functions 
\begin{equation}\label{Ji}
\mathfrak j_i(x):=\frac{\partial U}{\partial x_i}(x)=
\frac{\alpha_n}{|\mathbf  K(p)|^{\frac{n-2}{4}}}\frac{(2-n) x_i}{
\big ( |\tilde x|^2+(x_n+\mathfrak D)^2-1
\big ) ^{\frac{n}{2}}},\quad i=1, \ldots, n-1
\end{equation}
and
\begin{equation}\label{Jn}
\begin{split}
\mathfrak j_n(x)&:=
\Big ( \frac{2-n}{2}U(x)-\nabla U(x)\cdot 
(x+\mathfrak D\mathfrak e_n)+\mathfrak D
\frac{\partial U}{\partial x_n}
\Big ) \\
&=\frac{\alpha_n}{|\mathbf  K(p)|^{\frac{n-2}{4}}}\frac{n-2}{2}
\frac{|x|^2+1-\mathfrak D^2}{
\big ( |\tilde x|^2+(x_n+\mathfrak D)^2-1
\big ) ^{\frac{n}{2}}}.
\end{split}
\end{equation}

As it happens in the Yamabe problem, bubbles are not a good enough approximating solution due to some error terms arising from the geometry of
$M$. Therefore, they need to be corrected by a higher order term
$V_p:\mathbb R^n_+\to \mathbb R$, whose main properties are collected in the next proposition.\\

\begin{proposition}\label{vp} Let
$U$ be as in \eqref{U}. We set
	$$
		\mathtt E_p(x)=c_n \left(\frac 13 \bar R_{ijk\ell}(p) y_k y_\ell +R_{ninj}(p) y_n^2\right)\de^2_{ij} U(x),\ x\in \mathbb R^n_+
	$$
Then the problem
\begin{equation}\label{LEp}
\begin{cases}
\displaystyle
		-\frac{4(n-1)}{n-2}\Delta V + \frac{n+2}{n-2}\abs{\mathbf  K(p)}U^\frac{4}{n-2}V=	\mathtt E_p & \text{ in } \R^n_+, \\[6pt]
		\displaystyle
		\frac{2}{n-2}\frac{\de V}{\de \nu} - \frac{n}{n-2}\mathbf  H(p) U^\frac{2}{n-2}V=0 & \text{ on } \de \R^n_+,
\end{cases}
\end{equation}
admits a solution
$V_p$ satisfying the following properties:
\begin{enumerate}
	\item[(i)]
$\displaystyle\int\limits_{\R^n_+}V_p(x)\:\mathfrak j_i(x)dx=0$ for any
$ i=1,\ldots,n$ (see \eqref{Ji} and \eqref{Jn}),
	\item[(ii)]
$\abs{\nabla^\alpha V_p}(x) \lesssim 
\big ( 1+\abs{x}
\big ) ^{4-n-\alpha}\,$ for any 
$x\in\R^n_+
$ and
$\alpha = 0,1,2,$
	\item[(iii)]  
	\begin{equation*}
		|\mathbf K(p)|\int_{\R^n_+} U^\frac{n+2}{n-2}V_pdx = (n-1)\mathbf H(p)\int_{\de\R^n_+} U^\frac{n}{n-2}V_p\,d\tilde x,
	\end{equation*}
	\item[(iv)]   We have that
$$\int_{\R^n_+}
\Bigg ( -\frac{4(n-1)}{n-2}\Delta V_p+\frac{n+2}{n-2}\abs{\mathbf  K(p)}U^\frac{4}{n-2}V_p
\Bigg ) V_p \geq 0$$
		\item[(v)] The map
$p\mapsto V_p$ is
$C^2(\de M)$. 
\end{enumerate}
\end{proposition}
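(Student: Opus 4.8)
\emph{Strategy of proof.} The plan is a standard construction of a higher-order correction by Fredholm theory, carried out in the homogeneous Sobolev space $\mathcal D:=\mathcal D^{1,2}(\R^n_+)$. First observe that $\mathtt E_p$ is a finite linear combination of second derivatives $\de^2_{ij}U$ multiplied by homogeneous quadratic polynomials in $y$; since $|\de^2 U(x)|\lesssim(1+|x|)^{-n}$ one has $|\mathtt E_p(x)|\lesssim(1+|x|)^{2-n}$, so $\mathtt E_p\in L^{\frac{2n}{n+2}}(\R^n_+)$ precisely because $n\ge 7$. Hence $\varphi\mapsto\int_{\R^n_+}\mathtt E_p\varphi$ is a bounded functional on $\mathcal D$, and \eqref{LEp} takes the weak form $B(V,\varphi)=\int_{\R^n_+}\mathtt E_p\varphi$ for all $\varphi\in\mathcal D$, where $B$ is the bilinear form of the linearized operator $L$ whose kernel is $\mathrm{span}\{\mathfrak j_1,\dots,\mathfrak j_n\}$ (cf. \eqref{lineare} and Section~2 of \cite{Cruz-BlazquezPistoiaVaira_2022}).

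Since $L$ is self-adjoint and a compact perturbation of the isomorphism $c_n(-\Delta)$ with Neumann condition (the zero-order and boundary terms carry the decaying weights $U^{\frac4{n-2}}$, $U^{\frac2{n-2}}$), it is Fredholm of index zero; thus \eqref{LEp} is solvable iff $\int_{\R^n_+}\mathtt E_p\,\mathfrak j_i\,dx=0$ for $i=1,\dots,n$. I would check these by expanding $\mathtt E_p\mathfrak j_i$ and using: the parity of $U$ (even in each $x_k$, $k\le n-1$) and of $\mathfrak j_i$; the antisymmetries of the curvature tensor, which kill the $\bar R_{ijk\ell}$-contractions either by parity or because $\bar R_{iikk}=0$ and $\bar R_{ijji}=-\bar R_{ijij}$; the rotational invariance of $U$ in $\tilde x$, which makes $\int_{\R^n_+}y_n^2(\de^2_{ii}U)(\cdot)$ independent of $i$; and the vanishing $R_{nn}(p)=\sum_i R_{nini}(p)=0$ of \eqref{Rij}, which kills the remaining $R_{ninj}y_n^2$-contraction. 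This yields $V_p\in\mathcal D$, unique modulo $\ker L$, and I would normalize it by imposing (i): this is possible and unique because the Gram matrix $\bigl(\int_{\R^n_+}\mathfrak j_i\mathfrak j_j\,dx\bigr)$ is diagonal (by parity) with finite, strictly positive entries.

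Property (ii) then follows from interior/boundary elliptic regularity ($V_p\in C^2$) together with a bootstrap based on the Green representation for $L$ on the half-space (kernel $\mathcal O(|x-y|^{2-n})$): starting from the $L^\infty$ bound (Moser iteration) and the decay $\mathtt E_p=\mathcal O(|x|^{2-n})$, iterating $\int|x-y|^{2-n}(1+|y|)^{-a}\,dy\lesssim(1+|x|)^{2-a}$ for $2<a<n$ and absorbing the lower-order term $U^{\frac4{n-2}}V_p$ gives $|V_p(x)|\lesssim(1+|x|)^{4-n}$, and differentiating the representation (or Schauder estimates on dyadic annuli) produces the bounds for $\nabla V_p,\nabla^2 V_p$. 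For (iii), test \eqref{LEp} against $U$ and integrate by parts, using the boundary condition of $V_p$ in \eqref{LEp} and that of $U$ in \eqref{pblim}; the volume and boundary terms recombine so that (iii) becomes equivalent to $\int_{\R^n_+}\mathtt E_p\,U\,dx=0$, which holds by the same argument (integrating by parts $\int(\de^2_{ij}U)U\,y_ky_\ell$ and invoking the antisymmetries of the curvature tensor together with $R_{nn}(p)=0$). Finally (v) follows from the smooth dependence of $U$, $\mathtt E_p$, the projection onto $\ker L$ and the Green operator on $p$ through \eqref{gij}--\eqref{Rij}, via the implicit function theorem in $\mathcal D$.

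The delicate point is (iv). By the equation its left-hand side equals $B(V_p,V_p)$, and positivity of $B$ is not obvious because of the negative boundary term $-\tfrac{2n(n-1)}{n-2}\mathbf H(p)\int_{\de\R^n_+}U^{\frac2{n-2}}v^2$, which obstructs coercivity of $B$ on $(\ker L)^\perp$. If $B$ is nonetheless coercive there — plausible, since $\mathbf K(p)<0$ makes the bulk part $c_n\|\nabla v\|^2+\tfrac{n+2}{n-2}|\mathbf K(p)|\int U^{\frac4{n-2}}v^2$ coercive — then (iv) is immediate from $V_p\in(\ker L)^\perp$. If not, one must show that $\mathtt E_p=LV_p$ is orthogonal not only to $\ker L$ but to the entire finite-dimensional low part of the spectrum of $L$ on which $B$ fails to be positive; since $L$ commutes with rotations in $\tilde x$, that part lies in the low spherical-harmonic sectors of the $\tilde x$-angular variable, which are precisely the functions annihilating the curvature contractions in $\mathtt E_p$ (as in the computations above), so that $B(V_p,V_p)=\langle\mathtt E_p,L^{-1}\mathtt E_p\rangle$ retains only the positive part of the spectrum and is $\ge 0$. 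Locating the Morse index of the linearized operator at $U$ and matching it against the symmetries of $\mathtt E_p$ is the crux; the rest — the Fredholm argument, the elliptic estimates, and the bookkeeping with the expansion of Remark~\ref{umbilic} — is routine if lengthy.
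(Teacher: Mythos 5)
Your existence argument is a legitimate alternative route to the paper's: you invoke the Fredholm alternative for the linearized operator directly in $\mathcal D^{1,2}(\R^n_+)$ (kernel $={\rm span}\{\mathfrak j_1,\dots,\mathfrak j_n\}$ by the nondegeneracy result of \cite{Cruz-BlazquezPistoiaVaira_2022}), and check $\int_{\R^n_+}\mathtt E_p\,\mathfrak j_i\,dx=0$ by parity and the curvature identities; these are exactly the cancellations the paper computes. The paper instead transplants \eqref{LEp} conformally to the ball $B_R$, $R=\D-\sqrt{\D^2-1}$, where it becomes $\Delta_{\mathbb H}\hat v-n\hat v=\hat f$ with the Steklov-type condition $\partial_{\nu_{\mathbb H}}\hat v=\D\,\hat v$, and uses that $\D$ is precisely the \emph{second} eigenvalue of \eqref{LEp4}, with eigenspace corresponding to the $\mathfrak j_i$; solvability is then the same orthogonality condition, but the transformation also lays bare the full spectral structure, which is what properties (ii)--(v) (via Proposition 3.1 of \cite{Cruz-BlazquezPistoiaVaira_2022}) rest on.

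The genuine gap is item (iv). Your first alternative -- coercivity of the form $B$ on $(\ker L)^{\perp}$ -- is not merely unproven, it is false: since $\D>1$, $\D$ coincides with the second eigenvalue $\mu_1=\frac{1+R^2}{2R}$ of \eqref{LEp4}, while the first eigenvalue $\mu_0=\frac{2R}{1+R^2}<\D$ produces a direction $Z_0=\bar U\,(\phi_0\circ\Phi)$ (radial in $\tilde x$) on which $B(Z_0,Z_0)=(\mu_0-\D)\int_{\partial B_R}\phi_0^2\,d\sigma_{\mathbb H}<0$. Since the quantity in (iv) equals $B(V_p,V_p)$ after using the boundary condition, the form has a one-dimensional negative subspace and the inequality genuinely requires an argument. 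Your second alternative is the right idea but is exactly the crux you leave undone: one must show that $\mathtt E_p$ is orthogonal also to this negative direction (this does hold -- testing against a function radial in $\tilde x$, parity leaves only the contractions $\bar R_{k\ell}(p)$ and $R_{nn}(p)$, which vanish by \eqref{Rij}) and then justify a spectral decomposition for the Steklov-type pencil ensuring that $V_p$ can be taken with no component along $\phi_0$, so that $B(V_p,V_p)\geq 0$. This is precisely what the reduction to $B_R$ and Proposition 3.1 of \cite{Cruz-BlazquezPistoiaVaira_2022} provide; without it, (iv) -- and hence the positivity of $\mathtt B(p)$ in \eqref{bp1}, on which the existence proof relies -- is unsupported. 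The remaining items are fine in outline: in particular (iii) reduces, as you say, to $\int_{\R^n_+}\mathtt E_p\,U\,dx=0$, which is part of the $s=n$ orthogonality computation.
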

\begin{proof}
Let $\bar U=|\mathbf K(p)|^{\frac{n-2}{4}}U$. Then problem \eqref{LEp} becomes 
\begin{equation}\label{LEp1}
\begin{cases}
\displaystyle
		-\frac{4(n-1)}{n-2}\Delta V + \frac{n+2}{n-2}\bar U^\frac{4}{n-2}V=	\mathtt E_p & \text{ in } \R^n_+, \\[6pt]
		\displaystyle
		\frac{2}{n-2}\frac{\de V}{\de \nu} - \frac{n}{n-2}\frac{\D}{\sqrt{n(n-1)}}\bar U^\frac{2}{n-2}V=0 & \text{ on } \de \R^n_+.
\end{cases}
\end{equation}
Let $\Phi$ the map given by $$\Phi=\mathcal K^{-1}\circ \tau_{\D}:\mathbb R^n_+\to B_1(0)\subset \mathbb R^n$$ where $\tau_{\D}$ is the translation $x\mapsto x+\D \mathfrak e_n$ while $\mathcal K$ is the Cayley transform which maps conformally the ball of radius $1$ centered at the origin of $\mathbb R^n$ to the half-space $\mathbb R^n_+$.\\ It can be proved that, up to composing with a certain isometry of $\mathbb H^n$, the hyperbolic space, $${\rm Im}(\Phi)=B_R(0),\quad R=\D-\sqrt{\D^2-1}.$$ Moreover, $\Phi$ is a conformal map and $$\Phi^*g_{\mathbb H}=\frac{|\mathbf K(p)|}{n(n-1)}U^{\frac{4}{n-2}}g_0,\quad g_{\mathbb H}:=\frac{4|dx|^2}{(1-|x|^2)^2}\quad\mbox{on}\,\, B_R.$$ Then, $\hat v=(\bar U^{-1}v)\circ \Phi^{-1}$ is in $H^1(B_R)$ and satisfies the problem
\begin{equation}\label{LEp2}
\begin{cases}
\displaystyle
		\Delta_{\mathbb H} \hat v -n \hat v=\hat f  & \text{ in } B_R, \\[6pt]
		\displaystyle
	\frac{\de \hat v}{\de \nu_{\mathbb H}} =\D\hat v & \text{ on } \de \partial B_R
\end{cases}
\end{equation}
where we set $$\hat f(\Phi^{-1}(x))=\frac{n(n-2)}{4}\mathtt E_p(x)\bar U^{-\frac{n+2}{n-2}}$$ and where $$\Delta_{\mathbb H}\hat v =\frac{(1-|x|^2)^2}{4}\Delta\hat v+\frac{n-2}{2}\nabla \hat v \cdot x,\qquad \frac{\partial\hat v}{\partial \nu_{\mathbb H}}=\frac{1-|x|^2}{2}\frac{\partial\hat v}{\partial\nu}$$ are the Laplace-Beltrami operator and the normal derivative with respect to $g_{\mathbb H}$ respectively.\\ Now, it is known (see Lemma 2.3 in \cite{Cruz-BlazquezPistoiaVaira_2022}) that the first eigenvalue of the problem
\begin{equation}\label{LEp4}
\begin{cases}
\displaystyle
		\Delta_{\mathbb H} \hat \phi -n \hat \phi=0  & \text{ in } B_R, \\[6pt]
		\displaystyle
	\frac{\de \hat \phi}{\de \nu_{\mathbb H}} =\mu\hat \phi & \text{ on } \de \partial B_R
\end{cases}
\end{equation}
is $$\mu_0:=\frac{2R}{1+R^2}$$ and the corresponding eigenfuction is $$\phi_0:=\frac{1+|x|^2}{1-|x|^2}.$$ The second eigenvalue of \eqref{LEp4} is $$\mu_1:=\frac{1+R^2}{2R}$$ and the corresponding eigenspace is generated by the family $$\left\{\phi_1^i:=\frac{|x|x_i}{1-|x|^2},\quad i=1, \ldots, n\right\}.$$ Moreover it is shown in Lemma 2.3 in \cite{Cruz-BlazquezPistoiaVaira_2022} that $\hat{\mathfrak j}_i:=c_i \phi_i^1$.\\ Now, since $\D= \frac{1+R^2}{2R}$ with $R=\D-\sqrt{\D^2-1}$ then in \eqref{LEp2} we have that $\D$ is the second eigenvalue and a solution of \eqref{LEp2} exists if $\hat f$ is orthogonal to the elements of the kernel.\\ Indeed, we have that by the area formula and the fact that $\hat{\mathfrak j}_i:=c_i \phi_i^1$ we have that
$$\int_{B_R}\hat f (z) \phi_1^s(z)d\mu_{g_{\mathbb H}}=c_n \int_{\mathbb R^n_+}\left(\frac 13 \bar R_{ijk\ell}(p)x_k x_\ell +R_{ninj}(p) x_n^2\right)\partial^2_{ij}U \mathfrak j_s(x)\, dx.$$ Now, if $s=1, \ldots, n-1$ then by symmetry reason (since the integrand is odd with respect to $\tilde x$ that 
$$\int_{\mathbb R^n_+}\left(\frac 13 \bar R_{ijk\ell}(p)x_k x_\ell +R_{ninj}(p) x_n^2\right)\partial^2_{ij}U \mathfrak j_s(x)\, dx=\int_{\mathbb R^n_+}\left(\frac 13 \bar R_{ijk\ell}(p)x_k x_\ell +R_{ninj}(p) x_n^2\right)\partial^2_{ij}U \partial_s U(x)\, dx=0.$$
For $s=n$ we have that $$\mathfrak j_n(x):=\frac{2-n}{2}U(x)-\nabla U\cdot (x+\D\mathfrak e_n)+\D\frac{\partial U}{\partial x_n}=\frac{2-n}{2}U(x)-\sum_{a=1}^n x_a\partial_a U.$$
We also remark that for $i\neq j$ $$\partial^2_{ij}U(x)=\frac{\alpha_n}{|\mathbf K(p)|^{\frac{n-2}{4}}}\frac{n(n-2)x_ix_j}{\left(|\tilde x|^2+(x_n+\D)^2-1\right)^{\frac{n+2}{2}}}$$ while for $i=j$ then $\bar R_{iik\ell}=0$ and $R_{nini}=R_{nn}=0$. Then
$$\begin{aligned}
&\int_{\mathbb R^n_+}\left(\frac 13 \bar R_{ijk\ell}(p)x_k x_\ell +R_{ninj}(p) x_n^2\right)\partial^2_{ij}U U(x)\, dx=\\
&=\frac{\alpha_n^2}{|\mathbf K(p)|^{\frac{n-2}{2}}}\sum_{i\neq j}\int_{\mathbb R^n_+}\left(\frac 13 \bar R_{ijk\ell}(p)x_k x_\ell +R_{ninj}(p) x_n^2\right)\frac{n(n-2)x_ix_j}{\left(|\tilde x|^2+(x_n+\D)^2-1\right)^{n}}\, dx\\
&=\frac{\alpha_n^2}{|\mathbf K(p)|^{\frac{n-2}{2}}}\sum_{k}\int_{\mathbb R^n_+}\left(\frac 13 \bar R_{k\ell k\ell}(p)+\frac 13\bar R_{\ell k k\ell}(p)\right)\frac{n(n-2)x_k^2x_\ell^2}{\left(|\tilde x|^2+(x_n+\D)^2-1\right)^{n}}\, dx\\
&=0
\end{aligned}$$ The first equality is due to the fact that when $i=j$ all the terms are zero since $\bar R_{iik\ell}=0$ and $R_{nini}=R_{nn}=0$.\\ The second equality is due to the fact that when $i\neq j$ then by symmetry all the terms with $x_n^2 x_ix_j$ vanish and the other terms are non zero only when $i=k$ and $j=\ell$ or when $j=k$ and $i=\ell$. The last equality is due to the fact that $\bar R_{k\ell k\ell}=-\bar R_{\ell k k\ell}$.\\ Now, as before,
$$\begin{aligned}
&\sum_{a=1}^n\int_{\mathbb R^n_+}\left(\frac 13 \bar R_{ijk\ell}(p)x_k x_\ell +R_{ninj}(p) x_n^2\right)\partial^2_{ij}U x_a\partial_a U\, dx=\\
&=\frac{\alpha_n}{|\mathbf K(p)|^{\frac{n-2}{4}}}\sum_{i\neq j}\int_{\mathbb R^n_+}\left(\frac 13 \bar R_{ijk\ell}(p)x_k x_\ell +R_{ninj}(p) x_n^2\right)\frac{n(n-2)x_ix_j}{\left(|\tilde x|^2+(x_n+\D)^2-1\right)^{\frac{n+2}{2}}}\sum_{a=1}^n x_a\partial_a U\, dx\\
&=-\frac{\alpha_n^2}{|\mathbf K(p)|^{\frac{n-2}{2}}}n(n-2)^2\sum_{k}\int_{\mathbb R^n_+}\left(\frac 13 \bar R_{k\ell k\ell}(p)+\frac 13\bar R_{\ell k k\ell}(p)\right)\frac{x_k^2x_\ell^2\left(\sum_{a=1}^{n-1}x_a^2+x_n(x_n+\D)\right)}{\left(|\tilde x|^2+(x_n+\D)^2-1\right)^{n+1}}\, dx\\
&=0.
\end{aligned}$$ Hence  $$\int_{B_R}\hat f (z) \phi_1^s(z)d\mu_{g_{\mathbb H}}=0$$ and, by elliptic linear theory, there exists a solution $\hat v$ to \eqref{LEp2} which is orthogonal to $\left\{\phi_1^s\right\}_{s=1, \ldots, n}$. Consequently $v=\bar U (\hat v\circ \Phi)$ is a solution of \eqref{LEp1} which is orthogonal to $\mathfrak j_s$ with $s=1, \ldots, n$.\\\\ In order to show (ii)-(iii)-(iv)-(v) one can reason exactly as in the proof of Proposition 3.1 of \cite{Cruz-BlazquezPistoiaVaira_2022}.
\end{proof}
\begin{remark} It is hard to show (but it is reasonable that it is like this) that there exists a positive constant
$\mathtt F_n$ such that
$$\int_{\R^n_+}
\Bigg ( -\frac{4(n-1)}{n-2}\Delta V_p+\frac{n+2}{n-2}\abs{\mathbf K}U^\frac{4}{n-2}V_p
\Bigg ) V_p = \mathtt F_n -2R^2_{nins}\mathtt S$$ where $\mathtt S$ is defined in \eqref{s}. This means that the reduced functional given in Lemma \eqref{redf} when $\mathbf K, \mathbf H$ are constants is given by 
$$J_{\e, g}(\tilde\Theta_{\delta, g})=\mathfrak E+\mathtt A\gamma(p)\e\delta  -\delta^4\mathtt{\tilde F}_n|\overline{{\rm Weyl}_g}(p)|^2  +\mathcal O(\delta^5)$$ and hence the location of the blow-up depends on the non-degenerate critical points of the Weyl tensor on the boundary of the manifold. \end{remark}

We are now able to define the good ansatz 
of the solution we are looking for. To this aim, take
$p\in\partial M$ with
$\D>1$ and consider
$\psi_p^\partial: \mathbb R^n_+\to M$ 
the Fermi coordinates in a neighborhood of
$p$. Then, let us define 
\begin{equation*} 
\mathcal W_{p,\delta}(\xi) :=\chi
\Big ( 
  ( \psi_{p}^\partial
  ) ^{-1}(\xi)
\Big ) 
\frac{1}{\delta^{\frac{n-2}{2}}}U
\Big ( \frac{
 ( \psi_{p}^\partial
  ) ^{-1}(\xi)}{\delta}
\Big ) , \quad \mathcal V_{\delta, p}(\xi) :=\chi
\Big ( 
  ( \psi_{p}^\partial
  ) ^{-1}(\xi)
\Big ) 
\frac{1}{\delta^{\frac{n-2}{2}}}V_p
\Big ( \frac{
 ( \psi_{p}^\partial
  ) ^{-1}(\xi)}{\delta}
\Big )
\end{equation*}
where
$\chi$ is a radial cut-off function with
 support in a ball of radius
$R$. \\ Moreover, for any
$i=1, \ldots, n$,
we also set
\begin{equation*}
	\mathcal Z_{\delta, p, i}(\xi):=\frac{1}
	{\delta^{\frac{n-2}{2}}}\mathfrak j_i
\Big ( \frac{
  ( \psi_p^\partial
  ) ^{-1}(\xi)}{\delta}
\Big ) \chi
\Big ( 
  ( \psi_p^\partial
  ) ^{-1}(\xi)
\Big ) ,
\end{equation*} 
being
$\mathfrak j_i$ the functions defined in \eqref{Ji} 
and \eqref{Jn}.\\\\
 We look for a solution of the form $$u_\varepsilon=\mathcal W_{\delta, p}+\delta^2\mathcal V_{\delta, p}+\Phi$$ where $\Phi$ is a remainder term. So we will find a solution of the original problem \eqref{pb} of the form \begin{equation}\label{vsol}v_\varepsilon:=\Lambda_p\left(\mathcal W_{\delta, p}+\delta^2\mathcal V_{\delta, p}+\Phi\right).\end{equation}
In the following we simply use $\tilde{\mathcal W}_{\delta, p}, \tilde{\mathcal V}_{\delta, p}, \tilde\Phi, \tilde{\mathcal Z}_{\delta, p, i}$ to denote $\Lambda_p\mathcal W_{\delta, p}, \Lambda_p\mathcal V_{\delta, p}, \Lambda_p\Phi, \Lambda_p\mathcal Z_{\delta, p, i}$ respectively.\\ In order to simplify the notation we let $$\Theta_{\delta, p}:=\mathcal W_{\delta, p}+\delta^2\mathcal V_{\delta, p}\quad\mbox{and}\quad\tilde {\Theta}_{\delta, p}:=\tilde{\mathcal W}_{\delta, p}+\delta^2 \tilde{\mathcal V}_{\delta, p}.$$

Let us decompose
$H^1(M)$ into the direct sum of the 
following two subspaces
$$
\tilde{\mathcal K}:={\rm span}
\big\{\tilde{\mathcal Z}_{\delta, p, i}\,\,:\,\, i=1,
 \ldots, n\,\, \big\}
 $$ 
 and 
 $$
\tilde{ \mathcal K}^\bot:=
 \big\{\varphi\in H^1(M)\,:\, \langle
  \varphi, \tilde{\mathcal Z}_{\delta, p, i}\rangle_g=0,\quad i=1,
   \ldots, n\,\, \big\}.\\
$$ 

In order to solve \eqref{pb1} we will 
use the following finite-dimensional 
reduction: define the projections
$$
\Pi: H^1(M)\to \tilde{ \mathcal K},\qquad 
\Pi^\bot: H^1(M)\to \tilde{\mathcal K}^\bot.
$$
Therefore,
 solving \eqref{pb1} 
is equivalent to solve the system
\begin{equation}\label{aux}
\Pi^\bot
\Big \{v_\varepsilon
 -\mathfrak i^*_M(\mathbf K \mathfrak g(v_\varepsilon))-
 \mathfrak i^*_{\partial M}
\Big ( \frac{n-2}{2}
  ( \mathbf H \mathfrak f(v_\varepsilon)
-\varepsilon \gamma v_\varepsilon
 ) 
\Big ) \Big\}=0, 
\end{equation}
\begin{equation}
\label{bif} 
\Pi 
\Big \{v_\varepsilon 
-\mathfrak i^*_M(\mathbf K \mathfrak g(v_\varepsilon))
-\mathfrak i^*_{\partial M}
\Big ( \frac{n-2}{2}
  ( \mathbf H \mathfrak f(v_\varepsilon)-\varepsilon \gamma
v_\varepsilon
  ) 
\Big ) \Big\}=0,
\end{equation}  
with
$v_\varepsilon$ defined in \eqref{vsol}.

\eqref{aux} and \eqref{bif} are called the 
\textit{auxiliar} and the \textit{bifurcation} 
equations, respectively. Solving \eqref{aux} 
will give us the error term
$\tilde\Phi$. With the size of the error in mind, 
we check that our choice of the parameters 
leads to a solution of \eqref{bif}.

\section{the finite dimensional reduction}\label{riduzione}
The equation \eqref{aux} is equivalent to
\begin{equation}\label{LNE}
L(\tilde\Phi)=N(\tilde\Phi)+R
\end{equation}
where $L:\tilde{\mathcal K}^\bot\to \tilde{\mathcal K}^\bot$ is the linear operator defined as
\begin{equation}\label{L}
L(\tilde\Phi):=\Pi^\bot
\Big \{\tilde\Phi
 -\mathfrak i^*_M(\mathbf K \mathfrak g'(\tilde {\Theta}_{\delta, p})\tilde\Phi)-
 \mathfrak i^*_{\partial M}
\Big ( \frac{n-2}{2}
  \left( \mathbf H \mathfrak f'(\tilde {\Theta}_{\delta, p})\tilde\Phi
-\varepsilon \gamma\tilde\Phi
 \right) 
\Big ) \Big\}, 
\end{equation}
while the nonlinear term $N(\tilde\Phi)$ and the error $R$ are given respectively 
\begin{equation}\label{N}\begin{aligned}
N(\tilde \Phi)&:=\Pi^\bot
\Big \{
 \mathfrak i^*_M\left(\mathbf K \left(\mathfrak g(\tilde {\Theta}_{\delta, p}+\tilde\Phi)-\mathfrak g(\tilde {\Theta}_{\delta, p})-\mathfrak g'(\tilde{\Theta}_{\delta, p})\tilde\Phi\right)\right)\Big\}+\\
&+\Pi^\bot
\Big \{
 \mathfrak i^*_{\partial M}
\Big ( \frac{n-2}{2}
  \left( \mathbf H \left(\mathfrak f(\tilde {\Theta}_{\delta, p}+\tilde\Phi)-\mathfrak f(\tilde {\Theta}_{\delta, p})-\mathfrak f'(\tilde{\Theta}_{\delta, p})\tilde\Phi\right)\right)
\Big\}\end{aligned}
\end{equation}
\begin{equation}\label{R}
R:=\Pi^\bot
\Big \{
 \mathfrak i^*_M(\mathbf K \mathfrak g(\tilde {\Theta}_{\delta, p}))+
 \mathfrak i^*_{\partial M}
\Big ( \frac{n-2}{2}
  \left( \mathbf H \mathfrak f(\tilde {\Theta}_{\delta, p})
-\varepsilon \gamma\tilde {\Theta}_{\delta, p}
 \right) 
\Big )-\tilde {\Theta}_{\delta, p} \Big\}
\end{equation}

\subsection{The size of the error}
\begin{lemma}\label{errore}Assume $n\geq 8$ then it holds 
\begin{equation}\label{errorestima}
\|R\|_g \lesssim \left\{\begin{aligned} &\delta^2+\varepsilon\delta \quad &\mbox{if}\,\, &\mathbf K\, \mbox{and}\, \mathbf H \, \hbox{are not constants}\\
&\delta^3+\e \delta \quad &\mbox{if}\,\, &\mathbf K\, \mbox{and}\, \mathbf H \, \hbox{are constants}\end{aligned}\right.\end{equation}
\end{lemma}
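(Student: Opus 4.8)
The plan is to estimate the error $R$ defined in \eqref{R} by exploiting the characterization of $\mathfrak i^*_M$ and $\mathfrak i^*_{\partial M}$ together with the duality estimates $\|\mathfrak i^*_M(\mathfrak g)\|_g \lesssim \|\mathfrak g\|_{L^{2n/(n+2)}(M)}$ and $\|\mathfrak i^*_{\partial M}(\mathfrak f)\|_g \lesssim \|\mathfrak f\|_{L^{2(n-1)/n}(\partial M)}$. First I would observe that since $\Pi^\bot$ is a bounded projection, it suffices to bound the $H^1$-norm of the quantity inside the braces; and by the isometry property $\|\Lambda_p(\cdot)\|_g = \|\cdot\|_{\tilde g_p}$, I can work in the conformal metric $\tilde g_p$ satisfying the expansions \eqref{detg}--\eqref{Rij}, so the problem reduces to estimating $\|\mathfrak i^*_M(\mathbf K\mathfrak g(\Theta_{\delta,p})) + \mathfrak i^*_{\partial M}(\tfrac{n-2}{2}(\mathbf H\mathfrak f(\Theta_{\delta,p}) - \varepsilon\tilde\gamma\Theta_{\delta,p})) - \Theta_{\delta,p}\|_{\tilde g_p}$. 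Using that $\Theta_{\delta,p} = \mathcal W_{\delta,p} + \delta^2\mathcal V_{\delta,p}$ is (a cutoff of) an actual solution of the limiting system \eqref{pblim} corrected by $V_p$ solving \eqref{LEp}, the leading terms cancel and $R$ is governed by the discrepancy between $\mathcal L_{\tilde g_p}$ and its Euclidean counterpart plus the $\varepsilon$-perturbation.

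The key steps, in order, are: (1) rewrite $R$ as $\mathfrak i^*_M$ and $\mathfrak i^*_{\partial M}$ applied to explicit densities by computing $\mathcal L_{\tilde g_p}\Theta_{\delta,p}$ and $\mathcal B_{\tilde g_p}\Theta_{\delta,p}$ in Fermi coordinates; (2) insert the metric expansion \eqref{gij}, the volume expansion \eqref{detg}, the curvature expansion for $k_{\tilde g_p}$ in \eqref{Sg}, and the boundary term \eqref{hij}, so that after the cancellation coming from \eqref{pblim} and the definition of $\mathtt E_p$ in Proposition \ref{vp}, the interior density is a sum of terms of the schematic form (curvature)$\cdot$(polynomial in $y/\delta$)$\cdot U^{(n+2)/(n-2)}$ at orders $\delta^2$, $\delta^3$, $\delta^4$, plus error from the cutoff $\chi$ supported near $|y|\sim R$; (3) bound each such term in $L^{2n/(n+2)}(M)$ by rescaling $y = \delta x$, which produces a factor $\delta^{k}$ (with $k=2$ generically, $k=3$ when $\mathbf K,\mathbf H$ are constant because then $\nabla\mathbf K(p)=\nabla\mathbf H(p)=0$ by \eqref{Sg}-type vanishing and the first-order metric term integrates to zero against the relevant kernel, killing the $\delta^2$ contribution) times a convergent integral that requires $n\ge 8$ to converge — this is where the dimensional restriction enters, since the $\delta^4$ and cutoff tail terms involve integrals like $\int (1+|x|)^{4-n}U^{(n+2)/(n-2)}$ which need $n\ge 8$; (4) treat the boundary density analogously using $\tfrac{2}{n-2}\partial_\nu U = \mathbf H(p)U^{n/(n-2)}$ and \eqref{hij}, and bound the $\varepsilon\tilde\gamma\Theta_{\delta,p}$ term directly, giving the $\varepsilon\delta$ contribution (the factor $\delta$ coming from $\|\tilde\gamma\mathcal W_{\delta,p}\|_{L^{2(n-1)/n}(\partial M)} \sim \delta$).

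The main obstacle I expect is bookkeeping step (2)--(3): because the boundary is umbilic, the second fundamental form vanishes to order $o(|y|^3)$ by \eqref{hij} and the first nonvanishing geometric contribution to $k_{\tilde g_p}$ is second order (with the Weyl tensor appearing via $\partial^2_{ii}k_{\tilde g_p}(p) = -\tfrac16|\overline{{\rm Weyl}_g}(p)|^2$), so one must push the metric expansion \eqref{gij} to fourth order and carefully track which of the many curvature$\times$monomial terms survive after using the symmetries of $\bar R$ and the vanishing $\bar R_{k\ell}(p) = R_{nn}(p) = R_{nk}(p) = 0$ from \eqref{Rij}; in the constant-curvature case the additional subtlety is verifying that the correction $\delta^2\mathcal V_{\delta,p}$ with $V_p$ from Proposition \ref{vp} exactly cancels the $\mathcal O(\delta^2)$ piece $\mathtt E_p$, leaving genuinely $\mathcal O(\delta^3)$. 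Once the densities are identified, the $L^{2n/(n+2)}$ and $L^{2(n-1)/n}$ estimates are routine rescaling arguments, convergent precisely because $n\ge 8$. I would also remark that the computation parallels Lemma 4.1 of \cite{Cruz-BlazquezVaira_2025} and the error estimate in \cite{Cruz-BlazquezPistoiaVaira_2022}, the new feature being the higher-order metric expansion forced by umbilicity.
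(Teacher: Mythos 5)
Your plan follows essentially the same route as the paper's proof: reduce $\|R\|_g$ via the continuity of $\mathfrak i^*_M$, $\mathfrak i^*_{\partial M}$ and the conformal change to $\tilde g_p$, expand $\mathcal L_{\tilde g_p}\Theta_{\delta,p}$ and the boundary operator in Fermi coordinates using \eqref{detg}--\eqref{Rij}, exploit the cancellations coming from \eqref{pblim} and from $V_p$ solving \eqref{LEp}, and estimate the remaining densities in $L^{\frac{2n}{n+2}}(M)$ and $L^{\frac{2(n-1)}{n}}(\partial M)$ after rescaling, with the $\delta^2$ versus $\delta^3$ dichotomy coming from the second-order Taylor terms of $\mathbf K,\mathbf H$ at the non-degenerate critical point (absent when they are constant) and with $n\geq 8$ needed for the integrability of the terms involving $V_p\sim(1+|x|)^{4-n}$ and $k_{\tilde g_p}$. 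The only imprecision is the parenthetical in your step (3): the extra order in the constant case is not an orthogonality effect (``integrates to zero against the relevant kernel'') but simply the identical vanishing of $\mathbf K-\mathbf K(p)$ and $\mathbf H-\mathbf H(p)$ together with the pointwise cancellation of $\mathtt E_p$ by the correction $V_p$ (and symmetry/curvature identities for the first-order metric terms), which is exactly what you state correctly in your closing paragraph.
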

\begin{proof}
Let $$\gamma_M:=\mathfrak i^*_M(\mathbf K \mathfrak g(\tilde {\Theta}_{\delta, p}))\quad\hbox{and}\quad \gamma_{\partial M}:=\mathfrak i^*_{\partial M}
\Big ( \frac{n-2}{2}
  \left( \mathbf H \mathfrak f(\tilde {\Theta}_{\delta, p})
-\varepsilon \gamma\tilde {\Theta}_{\delta, p}
 \right) 
\Big ).$$ Hence it follows that
\begin{equation}\label{ii*}
\left\{\begin{aligned}&\mathcal L_g \gamma_M=\mathbf K \mathfrak g(\tilde {\Theta}_{\delta, p})\, &\hbox{in}\, M\\
&\mathcal B_g\gamma_M=0\, &\hbox{on}\, \partial M\end{aligned}\right.\quad
\left\{\begin{aligned}&\mathcal L_g\gamma_{\partial M}=0\quad &\hbox{in}\, M\\
&\mathcal B_g\gamma_{\partial M}=\frac{n-2}{2}
  \left( \mathbf H \mathfrak f(\tilde {\Theta}_{\delta, p})
-\varepsilon \gamma\tilde {\Theta}_{\delta, p}
 \right) \, &\hbox{on}\, \partial M\end{aligned}\right.
\end{equation}
Since $$d\nu_{\tilde g_p}=\sqrt{{\rm det}\tilde g_p}\, dx= \sqrt{{\rm det}(\Lambda_p^{\frac{4}{n-2}}g)}\, dx=\Lambda^{\frac{2n}{n-2}}\sqrt{{\rm det}g}\, dx=\Lambda_p^{2^*}d\nu_g$$
and similarly
$$d\sigma_{\tilde g_p}=\Lambda_p^{2^\sharp}d\sigma_g$$ we get that
$$\begin{aligned}\|R\|_g^2&=\|\gamma_M+\gamma_{\partial M}-\tilde {\Theta}_{\delta, p}\|^2_g\\
&=\int_M \mathcal L_g(\gamma_M+\gamma_{\partial M}-\tilde {\Theta}_{\delta, p})(\gamma_M+\gamma_{\partial M}-\tilde {\Theta}_{\delta, p})\, d\nu_g\\
&+\int_{\partial M}c_n\mathcal B_g (\gamma_M+\gamma_{\partial M}-\tilde {\Theta}_{\delta, p})(\gamma_M+\gamma_{\partial M}-\tilde {\Theta}_{\delta, p})\,d\sigma_g\\
&=\int_M \left(-\mathcal L_g(\tilde {\Theta}_{\delta, p})+\mathbf K\mathfrak g(\tilde {\Theta}_{\delta, p})\right) R\, d\nu_g\\
&+c_n\int_{\partial M}\left(-\mathcal B_g (\tilde {\Theta}_{\delta, p})+\mathcal B_g(\gamma_{\partial M})\right)R\, d\sigma_g\\
&=\int_M \left(-\mathcal L_{\tilde g_p}( {\Theta}_{\delta, p})+\mathbf K\mathfrak g({\Theta}_{\delta, p})\right) \Lambda_p^{-1}R\, d\nu_{\tilde g_p}\\
&+c_n\int_{\partial M}\left(-\mathcal B_{\tilde g_p} ({\Theta}_{\delta, p})+\mathcal B_{\tilde g_p}(\Lambda_p^{-1}\gamma_{\partial M})\right)\Lambda_p^{-1}R\, d\sigma_{\tilde g_p}\\
&=\int_M \left(c_n\Delta_{\tilde g_p}{\Theta}_{\delta, p}-k_{\tilde g_p} {\Theta}_{\delta, p}+\mathbf K\mathfrak g({\Theta}_{\delta, p})\right) \Lambda_p^{-1}R\, d\nu_{\tilde g_p}\\
&+c_n\int_{\partial M}\left(-\frac{\partial {\Theta}_{\delta, p}}{\partial \nu}-\frac{n-2}{2}h_{\tilde g_p}{\Theta}_{\delta, p}+\frac{n-2}{2}\mathbf H\mathfrak f({\Theta}_{\delta, p})-\frac{n-2}{2}\varepsilon\Lambda_p^{-\frac{2}{n-2}}\gamma{\Theta}_{\delta, p}\right)\Lambda_p^{-1}R\, d\sigma_{\tilde g_p}\\
&\lesssim \|c_n\Delta_{\tilde g_p}{\Theta}_{\delta, p}+\mathbf K\mathfrak g({\Theta}_{\delta, p})\|_{L^\frac{2n}{n+2}(M, \tilde g_p)}\|\Lambda_p^{-1}R\|_{\tilde g_p}+\|k_{\tilde g_p} {\Theta}_{\delta, p}\|_{L^\frac{2n}{n+2}(M, \tilde g_p)}\|\Lambda_p^{-1}R\|_{\tilde g_p}\\
&+\left\|-\frac{\partial {\Theta}_{\delta, p}}{\partial \nu}+\frac{n-2}{2}\mathbf H\mathfrak f({\Theta}_{\delta, p})\right\|_{L^\frac{2(n-1)}{n}(\partial M, \tilde g_p)}\|\Lambda_p^{-1}R\|_{\tilde g_p}\\
&+\|h_{\tilde g_p}{\Theta}_{\delta, p}\|_{L^\frac{2(n-1)}{n}(\partial M, \tilde g_p)}\|\Lambda_p^{-1}R\|_{\tilde g_p}+\|\varepsilon\Lambda_p^{-\frac{2}{n-2}}\gamma{\Theta}_{\delta, p}\|_{L^\frac{2(n-1)}{n}(\partial M, \tilde g_p)}\|\Lambda_p^{-1}R\|_{\tilde g_p}\\
\end{aligned}$$
Now, we recall the expression for the Laplace-Beltrami operator in local charts
$$\begin{aligned}\Delta_{\tilde g_p}&:=\Delta +\left[\tilde{g}^{ij}_p(y)-\delta_{ij}\right]\partial^2_{ij}+\left[\partial_i \tilde{g}^{ij}_p(y)+\frac{\tilde{g}^{ij}(y)\partial_i|\tilde g_p|^{\frac 12}(y)}{|\tilde g_p|^{\frac 12}(y)}\right]\partial_j +\frac{\partial_n|\tilde g_p|^{\frac 12}(y)}{|\tilde g_p|^{\frac 12}(y)}\partial_n.\end{aligned}$$
Then, in variables $y=\delta x$
$$\begin{aligned} c_n\Delta_{\tilde g_p}W_{\delta, p}&=\delta^{-\frac{n-2}{2}-2}c_n\Delta(U(x)\chi(\delta x))+\delta^{-\frac{n-2}{2}}c_n \left(\frac 13 \bar R_{ikj\ell}x_kx_\ell+R_{ninj}x_n^2+\delta\mathcal O(|x|^3)\right)\partial^2_{ij}(U(x)\chi(\delta x))\\
&+\delta^{-\frac{n-2}{2}}\frac{c_n}{3}\left(\bar R_{iij\ell} x_\ell+\bar R_{ikji}x_k+\delta\mathcal O(|x|^2)\right)\partial_j(U(x)\chi(\delta x))\\
&+\delta^{-\frac{n-2}{2}}\delta^2\mathcal O(|x|^3)\partial_n(U(x)\chi(\delta x)).\\
&=\delta^{-\frac{n+2}{2}}\underbrace{\left(|\mathbf K(p)|U^{2^*-1}\right)}_{\tiny{by \eqref{pblim}}}\chi(\delta x)+\underbrace{\left[\delta^{-\frac{n-2}{2}}c_n\left(\frac 13 \bar R_{ikj\ell}x_kx_\ell+R_{ninj}x_n^2\right)\partial^2_{ij}U(x)\right]}_{:=\delta^{-\frac{n-2}{2}}\mathtt E_p(x)}\chi(\delta x)\\
&+\mathcal O(\delta^{-\frac{n-2}{2}+1}|x|^3\partial^2_{ij}U(x))+\mathcal O(\delta^{-\frac{n-2}{2}+1}|x|^2\partial_j U(x))
\end{aligned}$$
We remark that, by symmetry, $\bar R_{iij\ell}=0$ and also $\bar R_{ikji}=-\bar
R_{jk}=0$ (see \cite{Marques}). 
Now 
$$\begin{aligned} \delta^2c_n\Delta_{\tilde g_p}V_{\delta, p}&=\delta^{-\frac{n-2}{2}}\Delta (V_p(x)\chi(\delta x))+\delta^{-\frac{n-2}{2}+2}\left(\mathcal O(|x|^2)\partial^2_{ij}(V_p(x)\chi(\delta x))+\mathcal O(|x|)\partial_j (V_p(x)\chi(\delta x))\right)\\
&=\underbrace{\delta^{-\frac{n-2}{2}}\left[|\mathbf K(p)|\mathfrak g'(U)V_p-\mathtt E_p(x)\right]}_{\tiny{by \eqref{LEp}}}\chi(\delta x)\\&+\delta^{-\frac{n-2}{2}+2}\left(\mathcal O(|x|^2)\partial^2_{ij}(V_p(x)\chi(\delta x))+\mathcal O(|x|)\partial_j (V_p(x)\chi(\delta x))\right).
\end{aligned}$$
Hence, in variables $y=\delta x$
$$\begin{aligned}
-\Delta_{\tilde g_p}( {\Theta}_{\delta, p})+\mathbf K\mathfrak g({\Theta}_{\delta, p})&=c_n\Delta_{\tilde g_p}W_{\delta, p}+\delta^2c_n\Delta_{\tilde g_p}V_{\delta, p}+\mathbf K\mathfrak g(W_{\delta, p}+\delta^2 V_{\delta, p})\\
&\hspace{-2cm }=\delta^{-\frac{n+2}{2}}\mathbf K(\delta x) \left(\mathfrak g(U+\delta^2 V_p)-\mathfrak g(U)-\delta^2 \mathfrak g'(U)V_p\right)(\chi(\delta x))^{2^*-1}\\
&\hspace{-2cm }+\delta^{-\frac{n+2}{2}}\left(\mathbf K(\delta x)-\mathbf K(p)\right)\mathfrak g(U)(\chi(\delta x))^{2^*-1}+\delta^{-\frac{n+2}{2}}\mathbf K(p)\mathfrak g(U)\left((\chi(\delta x)^{2^*-1}-\chi(\delta x)\right)\\
&\hspace{-2cm }+\delta^{-\frac{n-2}{2}}\left(\mathbf K(\delta x)-\mathbf K(p)\right)\mathfrak g'(U)V_p(\chi(\delta x))^{2^*-1}+\delta^{-\frac{n-2}{2}}\mathbf K(p)\mathfrak g'(U)V_p\left((\chi(\delta x)^{2^*-1}-\chi(\delta x)\right)\\
&\hspace{-2cm }+\mathcal O(\delta^{-\frac{n-2}{2}+1}|x|^3\partial^2_{ij}U(x))+\mathcal O(\delta^{-\frac{n-2}{2}+1}|x|^2\partial_j U(x))\\
&\hspace{-2cm }+\delta^{-\frac{n-2}{2}+2}\left(\mathcal O(|x|^2)\partial^2_{ij}(V_p(x)\chi(\delta x))+\mathcal O(|x|)\partial_j (V_p(x)\chi(\delta x))\right).
\end{aligned}$$
Now
$$\left|\mathfrak g(U+\delta^2 V_p)-\mathfrak g(U)-\delta^2 \mathfrak g'(U)V_p\right|
\lesssim U^{\frac{6-n}{n-2}}(\delta^2 V_p)^2\quad \hbox{since}\, \,n\geq 7
$$
Hence we get
$$\|\delta^{-\frac{n+2}{2}}\mathbf K(\delta x) \left(\mathfrak g(U+\delta^2 V_p)-\mathfrak g(U)-\delta^2 \mathfrak g'(U)V_p\right)(\chi(\delta x))^{2^*-1}\|_{L^{\frac{2n}{n+2}}(M, \tilde g_p)}\lesssim \delta^4. $$
Now if $\mathbf K$ is constant then $\delta^{-\frac{n+2}{2}}\left(\mathbf K(\delta x)-\mathbf K(p)\right)\mathfrak g(U)(\chi(\delta x))^{2^*-1}=0$, while if $\mathbf K$ is not a constant, since $p$ is a non-degenerate critical point of $\mathbf K$ then we get
$$\|\delta^{-\frac{n+2}{2}}\left(\mathbf K(\delta x)-\mathbf K(p)\right)\mathfrak g(U)(\chi(\delta x))^{2^*-1}\|_{L^{\frac{2n}{n+2}}(M, \tilde g_p)}\lesssim \delta^2.$$ Moreover
$$\|\delta^{-\frac{n+2}{2}}\mathbf K(p)\mathfrak g(U)\left((\chi(\delta x))^{2^*-1}-\chi(\delta x)\right)\|_{L^{\frac{2n}{n+2}}(M, \tilde g_p)}\lesssim \delta^3.$$
Again, if $K$ is constant then $\delta^{-\frac{n-2}{2}}\left(\mathbf K(\delta x)-\mathbf K(p)\right)\mathfrak g'(U)V_p(\chi(\delta x))^{2^*-1}=0$ while, if $\mathbf K$ is not constant we get
$$\|\delta^{-\frac{n-2}{2}}\left(\mathbf K(\delta x)-\mathbf K(p)\right)\mathfrak g'(U)V_p(\chi(\delta x))^{2^*-1}\|_{L^{\frac{2n}{n+2}}(M, \tilde g_p)}\lesssim \delta^4$$
and $$\|\delta^{-\frac{n-2}{2}}\mathbf K(p)\mathfrak g'(U)V_p\left((\chi(\delta x)^{2^*-1}-\chi(\delta x)\right)\|_{L^{\frac{2n}{n+2}}(M, \tilde g_p)}\lesssim \delta^4.$$
Hence
\begin{equation}\label{lapl}
\|c_n\Delta_{\tilde g_p}{\Theta}_{\delta, p}+\mathbf K\mathfrak g({\Theta}_{\delta, p})\|_{L^\frac{2n}{n+2}(M, \tilde g_p)}\lesssim\left\{\begin{aligned} &\delta^3 \quad &\hbox{if}\,\, \mathbf K\,\, \hbox{is constant}\\
&\delta^2 \quad &\hbox{if}\,\, \mathbf K\,\, \hbox{is not a constant}\\
\end{aligned}\right.\end{equation}
Now since $k_{\tilde g_p} (p)=k_{\tilde g_p, i}(p)=k_{\tilde g_p, n}(p)=0$ (see \cite{Marques}) then  $$\begin{aligned}\|k_{\tilde g_p} {\Theta}_{\delta, p}\|_{L^\frac{2n}{n+2}(M, \tilde g_p)}&\lesssim \delta^2\left[\int_{\mathbb R^n_+}\left(k_{\tilde g_p} (\delta x) U(x) \chi(\delta x)\right)^{\frac{2n}{n+2}}\right]^{\frac{n+2}{2n}} +\delta^4 \left[\int_{\mathbb R^n_+}\left(k_{\tilde g_p} (\delta x) V_p(x) \chi(\delta x)\right)^{\frac{2n}{n+2}}\right]^{\frac{n+2}{2n}}\\
&\lesssim \delta^4 \left[\int_{\mathbb R^n_+}\left(|x|^2 U(x)\right)^{\frac{2n}{n+2}}\right]^{\frac{n+2}{2n}}+\delta^6 \left[\int_{\mathbb R^n_+}\left( |x|^2 V_p(x) \chi(\delta x)\right)^{\frac{2n}{n+2}}\right]^{\frac{n+2}{2n}}\\
&\lesssim \left\{\begin{aligned} & \delta^4 \quad &\hbox{if}\,\, n\geq 10\\
&\delta^3 \quad &\hbox{if}\,\, n=8, 9.\\
\end{aligned}\right.
\end{aligned}$$
Now
$$\begin{aligned}\|\varepsilon\Lambda_p^{-\frac{2}{n-2}}\gamma{\Theta}_{\delta, p}\|_{L^\frac{2(n-1)}{n}(\partial M, \tilde g_p)}&\lesssim \varepsilon \delta \left(\int_{\mathbb R^{n-1}}|U(\tilde x, 0)\chi(\delta \tilde x, 0)|^{\frac{2(n-1)}{n}}\, dx\right)^{\frac{n}{2(n-1)}}\\
&+\varepsilon \delta^3 \left(\int_{\mathbb R^{n-1}}|V_p(\tilde x, 0)\chi(\delta \tilde x, 0)|^{\frac{2(n-1)}{n}}\, dx\right)^{\frac{n}{2(n-1)}}\\
&\lesssim \varepsilon \delta\end{aligned}$$ since for $n\geq 5$ $$\int_{\mathbb R^{n-1}}|U(\tilde x, 0)\chi(\delta \tilde x, 0)|^{\frac{2(n-1)}{n}}\, dx<+\infty$$ while 
$$\left(\int_{\mathbb R^{n-1}}|V_p(\tilde x, 0)\chi(\delta \tilde x, 0)|^{\frac{2(n-1)}{n}}\, dx\right)^{\frac{n}{2(n-1)}}\lesssim\left\{\begin{aligned} & c \quad &\hbox{if}\, n\geq 9\\
&|\log\delta|^{\frac 4 7} \quad &\hbox{if}\, n=8.\end{aligned}\right.$$
Since $h_{\tilde g_p}(p)=h_{\tilde g_p, i}(p)=h_{\tilde g_p, ik}(p)=0$ 
\begin{equation}\label{hg}\begin{aligned}\|h_{\tilde g_p}{\Theta}_{\delta, p}\|_{L^\frac{2(n-1)}{n}(\partial M, \tilde g_p)}&\lesssim  \delta^4 \left(\int_{\mathbb R^{n-1}}||\tilde x|^3U(\tilde x, 0)\chi(\delta \tilde x, 0)|^{\frac{2(n-1)}{n}}\, dx\right)^{\frac{n}{2(n-1)}}\\
&+\delta^6  \left(\int_{\mathbb R^{n-1}}||\tilde x|^3V_p(\tilde x, 0)\chi(\delta \tilde x, 0)|^{\frac{2(n-1)}{n}}\, dx\right)^{\frac{n}{2(n-1)}}\\
&\lesssim \delta^3\end{aligned}\end{equation}
since for $n\geq 8$
$$\left(\int_{\mathbb R^{n-1}}||\tilde x|^3U(\tilde x, 0)\chi(\delta \tilde x, 0)|^{\frac{2(n-1)}{n}}\, dx\right)^{\frac{n}{2(n-1)}}\lesssim c $$
and
$$\left(\int_{\mathbb R^{n-1}}||\tilde x|^3V_p(\tilde x, 0)\chi(\delta \tilde x, 0)|^{\frac{2(n-1)}{n}}\, dx\right)^{\frac{n}{2(n-1)}}\lesssim\left\{\begin{aligned} & c \quad &\hbox{if}\, n\geq 12\\
&\delta^{-\frac{3}{20}} \quad &\hbox{if}\, n=11\\
&\delta^{-\frac{2}{3}} \quad &\hbox{if}\, n=10\\
&\delta^{-\frac{19}{16}} \quad &\hbox{if}\, n=9\\
&\delta^{-\frac{12}{7}} \quad &\hbox{if}\, n=8.\end{aligned}\right.$$
At the end
$$\begin{aligned}&-\frac{\partial {\Theta}_{\delta, p}}{\partial \nu}+\frac{n-2}{2}H\mathfrak f({\Theta}_{\delta, p})=-\frac{\partial W_{\delta, p}}{\partial\nu}+\frac{n-2}{2}\mathbf H(p)W_{\delta, p}^{\frac {n}{n-2}}+\frac{n-2}{2}\left(\mathbf H-\mathbf H(p)\right)W_{\delta, p}^{\frac {n}{n-2}}\\
&+\frac{n-2}{2}\mathbf H\left[\mathfrak f(W_{\delta, p}+\delta^2 V_{\delta, p})-\mathfrak f(W_{\delta, p})\right]-\delta^2\frac{\partial V_{\delta, p}}{\partial\nu}\end{aligned}$$
Since $U$ satisfies \eqref{pblim} then $$\left\|-\frac{\partial W_{\delta, p}}{\partial\nu}+\frac{n-2}{2}\mathbf H(p)W_{\delta, p}^{\frac {n}{n-2}}\right\|_{L^\frac{2(n-1)}{n}(\partial M, \tilde g_p)}\lesssim \delta^3$$ 
Now if $\mathbf H$ is constant then $\mathbf H-\mathbf H(p)=0$ while, letting $p$ a non-degenerate critical point of $\mathbf H$ we get that
$$\|\left(\mathbf H-\mathbf H(p)\right)W_{\delta, p}^{\frac {n}{n-2}}\|_{L^\frac{2(n-1)}{n}(\partial M, \tilde g_p)}\lesssim \delta^2.$$ 
At the end we get
$$\begin{aligned}&\hskip-2.0cm \left\|\frac{n-2}{2}\mathbf H\left[\mathfrak f(W_{\delta, p}+\delta^2 V_{\delta, p})-\mathfrak f(W_{\delta, p})\right]-\delta^2\frac{\partial V_{\delta, p}}{\partial\nu}\right\|_{L^\frac{2(n-1)}{n}(\partial M, \tilde g_p)}\\
&\lesssim \left\|\frac{n-2}{2}\mathbf H\left(\mathfrak f(U+\delta^2 V_p)-\mathfrak f(U)\right)\chi^{\frac{n}{n-2}}(\delta\tilde x, 0)-\delta^2\frac{\partial V_p}{\partial\nu}\chi(\delta \tilde x, 0)\right\|_{L^\frac{2(n-1)}{n}(\partial M, \tilde g_p)}\\
&\lesssim \delta^2\left\|\mathbf H\mathfrak f'(U+\delta^2\theta V_p)V_p\chi(\delta\tilde x, 0)-\frac{n}{n-2}\mathbf H U^{\frac{2}{n-2}}\partial V_p\chi(\delta \tilde x, 0)\right\|_{L^\frac{2(n-1)}{n}(\partial M, \tilde g_p)}\\
&\lesssim \delta^2\|\mathbf H(\chi^{\frac{n}{n-2}}-\chi)\mathfrak f'(U+\delta^2\theta V_p)V_p\|_{L^\frac{2(n-1)}{n}(\partial M, \tilde g_p)}\\
&+\delta^2\|\mathbf H(\mathfrak f'(U+\delta^2\theta V_p)-\mathfrak f'(U))V_p\|_{L^\frac{2(n-1)}{n}(\partial M, \tilde g_p)}\\
&\lesssim \delta^3
\end{aligned}$$

\end{proof}

\subsection{Solving the equation \eqref{pb1}}
At this point we can use the same strategy of Proposition 4.1 of \cite{Cruz-BlazquezVaira_2025} to prove the following result
\begin{proposition}\label{phiex}
There exists a positive constant $C$ such that for $\e, \delta$ small, for any $p\in\partial M$ there exists a unique $\tilde\Phi:=\tilde\Phi_{\varepsilon, \delta, p}\in \tilde K^\bot$ which solves \eqref{pb1}such that 
\begin{equation}\label{sizephi}
\|\tilde\Phi\|_g=\|\Lambda_p\Phi\|_g \lesssim \left\{\begin{aligned} &\delta^2+\varepsilon\delta \quad &\mbox{if}\,\, &\mathbf K\, \mbox{and}\, \mathbf H \, \hbox{are not constants}\\
&\delta^3+\e \delta \quad &\mbox{if}\,\, &\mathbf K\, \mbox{and}\, \mathbf H \, \hbox{are constants}\end{aligned}\right.\end{equation}
\end{proposition}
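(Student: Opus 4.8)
The plan is to solve the fixed point equation \eqref{LNE}, i.e. $L(\tilde\Phi)=N(\tilde\Phi)+R$, by the contraction mapping theorem on the ball
\[
\mathcal B_\rho:=\big\{\tilde\Phi\in\tilde{\mathcal K}^\bot\ :\ \|\tilde\Phi\|_g\le \rho\big\},\qquad \rho:=C_0\big(\delta^2+\e\delta\big)\ \ \big(\text{resp. }C_0(\delta^3+\e\delta)\ \text{if }\mathbf K,\mathbf H\ \text{constant}\big),
\]
for a suitably large $C_0$. The three ingredients are: the uniform invertibility of $L$; a contraction estimate for the nonlinear term $N$; and the bound on $\|R\|_g$ already established in Lemma \ref{errore}. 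Throughout one may use the isometry $\|\Lambda_p\,\cdot\,\|_g=\|\cdot\|_{\tilde g_p}$ to read all estimates in the metric $\tilde g_p$, for which the Fermi expansions of Remark \ref{umbilic} are available.

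First I would show that $L:\tilde{\mathcal K}^\bot\to\tilde{\mathcal K}^\bot$ is invertible with $\|L^{-1}\|\le C$, uniformly for $\e,\delta$ small and $p\in\partial M$. Writing $L=\mathrm{Id}-\Pi^\bot T_{\delta,p}$ with $T_{\delta,p}$ compact, by Fredholm theory it suffices to rule out sequences $\delta_k\to0$, $\e_k\to0$, $p_k\to p_0$ and $\tilde\Phi_k\in\tilde{\mathcal K}^\bot$ with $\|\tilde\Phi_k\|_g=1$ and $\|L(\tilde\Phi_k)\|_g\to0$. Rescaling $\Phi_k$ around $p_k$ at scale $\delta_k$ in Fermi coordinates, and using the metric expansion of Remark \ref{umbilic} together with the decay estimate Proposition \ref{vp}(ii) for $V_p$ (so that the correction $\delta^2\mathcal V_{\delta,p}$ enters only as a lower order perturbation of $\mathcal W_{\delta,p}$ in the potential, and the $\e\tilde\gamma$-term contributes $O(\e)$ in operator norm), the rescaled functions converge, weakly in $H^1_{\mathrm{loc}}(\mathbb R^n_+)$ and in the natural finite-energy sense, to a solution $\Phi_\infty$ of the limiting linear problem \eqref{lineare}. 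Since the orthogonality conditions defining $\tilde{\mathcal K}^\bot$ rescale to orthogonality (in $\mathcal D^{1,2}(\mathbb R^n_+)$) to $\mathrm{span}\{\mathfrak j_1,\dots,\mathfrak j_n\}$, and the only finite-energy solutions of \eqref{lineare} are the $\mathfrak j_i$ of \eqref{Ji}--\eqref{Jn} (the nondegeneracy result quoted after \eqref{lineare}), we get $\Phi_\infty\equiv0$; a standard splitting of the norm into the rescaled region and its complement then forces $\|\tilde\Phi_k\|_g\to0$, a contradiction. This is exactly the strategy of the corresponding statement in \cite{Cruz-BlazquezPistoiaVaira_2022,Cruz-BlazquezVaira_2025}, adapted to the present bubble with $\D>1$.

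Next, for the nonlinear term, I would use the continuity of $\mathfrak i^*_M$ and $\mathfrak i^*_{\partial M}$, the Sobolev embeddings $H^1(M)\emb L^{\dst}(M)$, $H^1(M)\emb L^{\dsh}(\partial M)$, and the elementary inequalities for $\mathfrak g(t)=(t^+)^{\frac{n+2}{n-2}}$ and $\mathfrak f(t)=(t^+)^{\frac{n}{n-2}}$ (both exponents lying in $(1,2)$ when $n\ge7$, whence $|\mathfrak g(a+b)-\mathfrak g(a)-\mathfrak g'(a)b|\lesssim|b|^{\frac{n+2}{n-2}}$, $|\mathfrak g'(a)-\mathfrak g'(b)|\lesssim|a-b|^{\frac{4}{n-2}}$, and likewise for $\mathfrak f$) to obtain, for $\tilde\Phi,\tilde\Phi_1,\tilde\Phi_2\in\mathcal B_\rho$,
\[
\|N(\tilde\Phi_1)-N(\tilde\Phi_2)\|_g\lesssim \big(\|\tilde\Phi_1\|_g+\|\tilde\Phi_2\|_g\big)^{\frac{4}{n-2}}\|\tilde\Phi_1-\tilde\Phi_2\|_g,\qquad \|N(\tilde\Phi)\|_g\lesssim\|\tilde\Phi\|_g^{\frac{n+2}{n-2}}.
\]
Combining this with the uniform bound $\|L^{-1}\|\le C$ and with $\|R\|_g\lesssim\rho/C_0$ from Lemma \ref{errore}, the map $\mathcal T(\tilde\Phi):=L^{-1}\big(N(\tilde\Phi)+R\big)$ sends $\mathcal B_\rho$ into itself (choosing $C_0$ large) and is a contraction there, with contraction constant $\lesssim\rho^{\frac{4}{n-2}}\to0$ as $\e,\delta\to0$. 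The Banach fixed point theorem then yields the unique $\tilde\Phi=\tilde\Phi_{\e,\delta,p}\in\mathcal B_\rho$, and $\tilde\Phi\in\mathcal B_\rho$ is exactly the asserted bound \eqref{sizephi}.

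The main obstacle is the uniform invertibility of $L$: one has to check that the extra terms produced by the umbilicity-driven higher order expansion of the metric and by the slowly decaying correction $\delta^2\mathcal V_{\delta,p}$ (decay only $(1+|x|)^{4-n}$, by Proposition \ref{vp}(ii)) are $o(1)$ in operator norm \emph{uniformly in} $p\in\partial M$; this is the reason one invokes the full argument of Proposition 4.1 in \cite{Cruz-BlazquezVaira_2025} rather than a verbatim citation. The nonlinear step and the fixed point argument are then routine.
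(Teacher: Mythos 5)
Your proposal is correct and follows essentially the same route as the paper, which at this point simply invokes the strategy of Proposition 4.1 of \cite{Cruz-BlazquezVaira_2025}: uniform invertibility of $L$ on $\tilde{\mathcal K}^\bot$ via blow-up and the nondegeneracy of \eqref{lineare}, superlinear estimates on $N$, and a contraction on a ball of radius comparable to the bound on $\|R\|_g$ from Lemma \ref{errore}. Only a cosmetic remark: the boundary nonlinearity $\mathfrak f$ has exponent $\frac{n}{n-2}$, so the corresponding terms give $\|N(\tilde\Phi)\|_g\lesssim\|\tilde\Phi\|_g^{\frac{n}{n-2}}$ and a contraction constant of order $\rho^{\frac{2}{n-2}}$ rather than $\rho^{\frac{4}{n-2}}$, which changes nothing in the argument.
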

\section{The reduced functional}\label{ridotto}
In this section we perform the expansion of the functional with respect to the parameter $\e$ and $\delta$. First, reasoning as in \cite{Cruz-BlazquezVaira_2025} we get that
\begin{lemma}\label{energiabubble}
The energy of the bubble is:
\begin{align*} 
\mathfrak E(p)
&:=\frac{a_n}{|\mathbf K(p)|^{\frac{n-2}{2}}}
 \Big [-(n-1) \varphi_{\frac{n+1}{2}}(p)+
\frac{\mathfrak D}
{(\mathfrak D^2-1)^{\frac{n-1}{2}}} \Big ],
\end{align*}
where
$$
a_n:=\alpha_n^\dsh \omega_{n-1}I^n_{n-1}\frac{n-3}{(n-1)\sqrt{n(n-1)}}.
$$
\end{lemma}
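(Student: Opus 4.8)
The plan is to read off $\mathfrak E(p)$ as the (scale–invariant) leading term of $J_{\varepsilon,g}(\tilde\Theta_{\delta,p})$ when $\delta\to0$ and $\varepsilon\to0$, and then to evaluate that term explicitly by exploiting the limit equation \eqref{pblim}. By the isometry identity \eqref{relenergia} it suffices to expand $\tilde J_{\varepsilon,\tilde g_p}(\Theta_{\delta,p})$, so I would write all the integrals in the Fermi coordinates $\psi_p^\partial$ and perform the change of variables $y=\delta x$. The expansions collected in Remark \ref{umbilic} give $\sqrt{|\det\tilde g_p|}(\delta x)=1+\bigo{\delta^n|x|^n}$ by \eqref{detg}, $|k_{\tilde g_p}(\delta x)|=\bigo{\delta^2|x|^2}$ by \eqref{Sg}, and $|h_{\tilde g_p}(\delta x)|=o(\delta^3|x|^3)$ by \eqref{hij}; together with the decay estimate of Proposition \ref{vp}(ii) for $V_p$, the decay of $U$, and the fact that $\chi$ only introduces exponentially small errors, this shows that the curvature contributions, the term $(n-1)\varepsilon\int_{\de M}\gamma(\,\cdot\,)^2$, the cut-off errors and the whole correction $\delta^2\mathcal V_{\delta,p}$ are all $o(1)$. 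What survives at order $\delta^0$ is the flat model functional evaluated at $U$:
\[
\mathfrak E(p)=\frac{c_n}{2}\int_{\R^n_+}|\nabla U|^2\,dx+\frac{n-2}{2n}\,|\mathbf K(p)|\int_{\R^n_+}U^{\dst}\,dx-\frac{c_n(n-2)^2}{4(n-1)}\,\mathbf H(p)\int_{\de\R^n_+}U^{\dsh}\,d\tilde x .
\]

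Next I would eliminate the Dirichlet energy. Testing \eqref{pblim} against $U$ and integrating by parts on $\R^n_+$ (using the Neumann condition to rewrite the boundary term) yields
\[
c_n\int_{\R^n_+}|\nabla U|^2\,dx=c_n\,\frac{n-2}{2}\,\mathbf H(p)\int_{\de\R^n_+}U^{\dsh}\,d\tilde x-|\mathbf K(p)|\int_{\R^n_+}U^{\dst}\,dx .
\]
Plugging this in and using $c_n=\tfrac{4(n-1)}{n-2}$, the coefficients collapse and one is left with
\[
\mathfrak E(p)=-\frac{|\mathbf K(p)|}{n}\int_{\R^n_+}U^{\dst}\,dx+\mathbf H(p)\int_{\de\R^n_+}U^{\dsh}\,d\tilde x .
\]

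It then remains to compute the two integrals, which is a direct calculation. For the boundary integral I set $x_n=0$, so $U^{\dsh}$ is a constant multiple of $(|\tilde x|^2+\D^2-1)^{-(n-1)}$; polar coordinates on $\R^{n-1}$ and the rescaling $|\tilde x|=\sqrt{\D^2-1}\,\rho$ turn it into a constant times $(\D^2-1)^{-\frac{n-1}{2}}I^{n-2}_{n-1}$, and inserting $\mathbf H(p)=\D\,|\mathbf K(p)|^{1/2}/\sqrt{n(n-1)}$ (i.e.\ the definition of $\D$) produces the term $\D\,(\D^2-1)^{-\frac{n-1}{2}}$ of the statement. For the interior integral I use the slab description $\R^n_+=\{(\tilde x,t-\D):\tilde x\in\R^{n-1},\ t>\D\}$, integrate over $\tilde x$ first (the rescaling by $\sqrt{t^2-1}$ produces $I^{n-2}_{n}$) and am left with $\int_{\D}^{+\infty}(t^2-1)^{-\frac{n+1}{2}}\,dt=\varphi_{\frac{n+1}{2}}(p)$, i.e.\ a constant times $\varphi_{\frac{n+1}{2}}(p)$. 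Finally I match constants: from $\alpha_n^{2/(n-2)}=(4n(n-1))^{1/2}=2\sqrt{n(n-1)}$ one gets $\alpha_n^{\dst}=2\sqrt{n(n-1)}\,\alpha_n^{\dsh}$, and combining this with the Beta-function identities $I^{n-2}_{n-1}=\tfrac{n-3}{n-1}I^{n}_{n-1}$ and $I^{n-2}_{n}=\tfrac{n-3}{2(n-1)}I^{n}_{n-1}$ (of the same kind as \eqref{prop}) makes the common prefactor $a_n=\alpha_n^{\dsh}\omega_{n-1}I^{n}_{n-1}\tfrac{n-3}{(n-1)\sqrt{n(n-1)}}$ factor out of both terms, leaving exactly $\dfrac{a_n}{|\mathbf K(p)|^{(n-2)/2}}\Big[-(n-1)\varphi_{\frac{n+1}{2}}(p)+\D\,(\D^2-1)^{-\frac{n-1}{2}}\Big]$.

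I do not expect a genuine obstacle here: the argument is a localisation followed by two explicit scalings. The only steps requiring care are the bookkeeping — verifying that each geometric, cut-off and $\varepsilon$-remainder is really $o(1)$ as $\delta\to0$, and tracking the combinatorial constants through the two scalings without a slip — and it is cleanest to keep \eqref{pblim} in play throughout so that $\int_{\R^n_+}|\nabla U|^2$ never has to be computed directly.
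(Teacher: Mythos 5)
Your proposal is correct and follows essentially the route the paper intends (the paper itself gives no proof here, delegating to \cite{Cruz-BlazquezVaira_2025}): isolate the $\delta^0$ term of the reduced energy, use the limit problem \eqref{pblim} tested against $U$ to eliminate the Dirichlet term, reduce to $-\frac{|\mathbf K(p)|}{n}\int_{\R^n_+}U^{\dst}\,dx+\mathbf H(p)\int_{\partial\R^n_+}U^{\dsh}\,d\tilde x$, and evaluate by the two scalings; your identities $I^{n-2}_{n-1}=\frac{n-3}{n-1}I^{n}_{n-1}$, $I^{n-2}_{n}=\frac{n-3}{2(n-1)}I^{n}_{n-1}$ and $\alpha_n^{\dst}=2\sqrt{n(n-1)}\,\alpha_n^{\dsh}$ all check out and reproduce $a_n$ exactly, with $\omega_{n-1}$ understood (as elsewhere in the paper) as the measure of the unit sphere of $\R^{n-1}$. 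The only slip is cosmetic: the cut-off errors are polynomially small in $\delta$ (e.g. $O(\delta^{n-2})$ for the gradient term), not exponentially small, but they are still $o(1)$, so the conclusion is unaffected.
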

Moreover, again as in \cite{Cruz-BlazquezVaira_2025} we can show that

$$J_{\e, g}(\tilde W_{\delta, p}+\delta^2V_{\delta, p}+\tilde \Phi)-J_{\e, g}(\tilde W_{\delta, p}+\delta V_{\delta, p})=\left\{\begin{aligned}&\mathcal O\left(\delta^3\right)\quad &\mbox{if}\,\, \mathbf K, \mathbf H \,\,\mbox{are not constants}\\ 
&\mathcal O\left(\delta^5\right)\quad &\mbox{if}\,\, \mathbf K, \mathbf H \,\,\mbox{are constants}\end{aligned}\right.
$$
$C^0-$ uniformly for $p\in\partial M$.
Now we need to expand the energy on the ansatz.

\begin{lemma}\label{redf}
If $\mathbf H, \mathbf K$ are not constants, then, for $\e$ sufficiently small, it holds
$$J_{\e, g}(\tilde\Theta_{\delta, p})=\mathfrak E(p)+\mathtt A(p)\e \delta -\mathtt B(p)\delta^2+\mathcal O(\delta^3)$$
where $\mathfrak E(p)$ is the energy of the bubble evaluated in Lemma \ref{energiabubble} and $\mathtt A(p)$ and $\mathtt B(p)$ are defined in \eqref{ap} and \eqref{bp} respectively.\\ 
If, instead, $\mathbf H, \mathbf K$ are constants, then, for $\e$ sufficiently small, it holds
$$J_{\e, g}(\tilde\Theta_{\delta, g})=\mathfrak E+\mathtt A\gamma(p)\e\delta  -\delta^4\mathtt B(p)+\mathcal O(\delta^5)$$
where $\mathfrak E$ is the energy of the bubble evaluated in Lemma \ref{energiabubble} and $\mathtt A$ and $\mathtt B(p)$ are defined in \eqref{ap} and \eqref{bp1} respectively.\\ 
\end{lemma}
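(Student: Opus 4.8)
The plan is to expand $J_{\e, g}(\tilde\Theta_{\delta,p}) = \tilde J_{\e, \tilde g_p}(\Theta_{\delta,p})$ using the relation \eqref{relenergia}, then work entirely in Fermi coordinates on $\R^n_+$ with the metric expansions \eqref{detg}--\eqref{Sg}. Writing $\Theta_{\delta,p} = \W_{\delta,p} + \delta^2 \Vd$, I would split the energy into its quadratic (gradient, curvature, boundary-mean-curvature, and $\e\gamma$) terms and the nonlinear $\mathfrak G,\mathfrak F$ terms, and in each integral rescale $y = \delta x$. The leading term reproduces the bubble energy $\mathfrak E(p)$ from Lemma \ref{energiabubble}, since $U$ solves \eqref{pblim}. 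The $\e$-term comes solely from $(n-1)\e\int_{\de M}\tilde\gamma\,\Theta_{\delta,p}^2$: after rescaling this is $\e\delta\cdot(n-1)\gamma(p)\int_{\de\R^n_+}U(\tilde x,0)^2\,d\tilde x\,(1+o(1))$, which defines $\mathtt A(p)$ (resp. $\mathtt A\gamma(p)$ when $\mathbf K,\mathbf H$ are constant, where $\gamma$ is genuinely a function), using $\Lambda_p(p)=1$, $\nabla\Lambda_p(p)=0$.

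The heart of the computation is isolating the $\delta^2$ term (non-constant case) or the $\delta^4$ term (constant case). In the non-constant case the $\delta^2$ contribution arises from: (a) the Taylor expansion of $\mathbf K$ and $\mathbf H$ at the non-degenerate critical point $p$, producing $\delta^2$ times integrals of $D^2\mathbf K(p)$, $D^2\mathbf H(p)$ against $|x|^2U^{2^*}$ and $|\tilde x|^2U^{2^\sharp}$; and (b) the first nontrivial term in the metric expansion \eqref{gij} interacting with $|\nabla U|^2$ together with the scalar-curvature term \eqref{Sg} (whose Hessian is $-\frac16|\overline{\rm Weyl}_g(p)|^2$) — all these are $O(\delta^2)$. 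One must check that the cross-terms with $\delta^2\Vd$ do not contribute at order $\delta^2$: the defining equation \eqref{LEp} for $V_p$ is precisely what makes the $\mathtt E_p$-term cancel, so the $V_p$ correction only enters at order $\delta^4$ (this is the whole point of introducing it, and is why in the non-constant case no correction is needed to reach order $\delta^2$ but in the constant case it is needed to reach $\delta^4$). Collecting everything defines $\mathtt B(p)$ via \eqref{bp} (resp. \eqref{bp1}). In the constant case the Taylor terms of $\mathbf K,\mathbf H$ vanish, the would-be $\delta^2$ and $\delta^3$ geometric terms vanish by the improved expansions \eqref{detg}--\eqref{Sg} and the orthogonality/oddness identities (as in the analogous cancellations in the proof of Proposition \ref{vp}), and the first surviving geometric term is $O(\delta^4)$, governed by $|\overline{\rm Weyl}_g(p)|^2$ together with the quadratic form $\int(-c_n\Delta V_p + \tfrac{n+2}{n-2}|\mathbf K|U^{\frac4{n-2}}V_p)V_p$ appearing in Proposition \ref{vp}(iv) and the Remark.

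For the error bounds: the difference $J_{\e,g}(\tilde\W_{\delta,p}+\delta^2\Vd+\tilde\Phi) - J_{\e,g}(\tilde\W_{\delta,p}+\delta^2\Vd)$ is $O(\delta^3)$ (resp. $O(\delta^5)$) by the quoted estimate, using $\|\tilde\Phi\|_g$ from Proposition \ref{phiex} together with the standard Taylor expansion of $J_{\e,g}$ around $\tilde\Theta_{\delta,p}$ and the fact that $R = \nabla J_{\e,g}(\tilde\Theta_{\delta,p})$ in the reduced sense has the size from Lemma \ref{errore}. It remains to control the remainders in the rescaled integrals: the higher-order terms in \eqref{gij} beyond the ones used, the tail contributions from the cutoff $\chi$ (exponentially small in $\delta$ in the relevant $L^q$ senses, or polynomially small, since $U\sim |x|^{2-n}$ and $V_p\sim|x|^{4-n}$ decay and $n\ge 8$), and the higher Taylor remainders of $\mathbf K,\mathbf H$; each is checked to be $O(\delta^3)$ (resp. $O(\delta^5)$) exactly as in \cite{Cruz-BlazquezVaira_2025} and \cite{Cruz-BlazquezPistoiaVaira_2022}. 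The main obstacle I anticipate is the bookkeeping in the constant case: showing that every term of order $\delta^2$ and $\delta^3$ cancels and correctly assembling the $\delta^4$-coefficient $\mathtt B(p)$ in terms of $|\overline{\rm Weyl}_g(p)|^2$ — this requires the full fourth-order metric expansion \eqref{gij}, the precise identities \eqref{detg}, \eqref{hij}, \eqref{barSg}, \eqref{Sg}, \eqref{Rij}, the relations \eqref{prop} among the $I_m^\alpha$, and careful use of the oddness/symmetry cancellations of the type already carried out in the proof of Proposition \ref{vp}.
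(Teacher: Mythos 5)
Your overall strategy (conformal change to $\tilde g_p$, splitting of the energy, rescaling $y=\delta x$, Taylor expansion of $\mathbf K,\mathbf H$ at the non-degenerate critical point, use of the expansions \eqref{detg}--\eqref{Rij}) is the same as the paper's, but your order bookkeeping in the non-constant case contains a genuine error that would change the coefficient $\mathtt B(p)$. You assert that the first metric correction in \eqref{gij} tested against $|\nabla U|^2$ and the scalar-curvature term contribute at order $\delta^2$. They do not: since $k_{\tilde g_p}(p)=0$ and $\nabla k_{\tilde g_p}(p)=0$ by \eqref{Sg}, the term $\tfrac12\int_M k_{\tilde g_p}\Theta_{\delta,p}^2\,d\nu_{\tilde g_p}$ is of order $\delta^4$, so the Hessian value $-\tfrac16|\overline{{\rm Weyl}_g}(p)|^2$ enters only the constant-case coefficient \eqref{bp1}, not \eqref{bp}; and the would-be $\delta^2$-term $\tfrac{c_n}{2}\delta^2\int_{\R^n_+}\bigl(\tfrac13\bar R_{ikj\ell}x_kx_\ell+R_{ninj}x_n^2\bigr)\partial_iU\,\partial_jU\,dx$ vanishes identically by the curvature symmetries and \eqref{Rij} (the analogue of Lemma 6 of \cite{GMP}). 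This cancellation is needed in \emph{both} cases, and your plan is internally inconsistent on this point: you let these geometric terms vanish in the constant case but survive at order $\delta^2$ in the non-constant case, although they do not depend on $\mathbf K,\mathbf H$ at all. Without these two facts the $\delta^2$-coefficient would not reduce to the Hessian terms of $\mathbf K$ and $\mathbf H$ alone, i.e. you would not obtain the $\mathtt B(p)$ of \eqref{bp} stated in the lemma.

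A second misattribution: the reason $V_p$ is invisible at order $\delta^2$ is not the equation \eqref{LEp}. In the paper, the cross term $c_n\delta^2\int\nabla U\cdot\nabla V_p$ is integrated by parts and, via the equation \eqref{pblim} satisfied by $U$ (interior and boundary), produces exactly $\delta^2\mathbf K(p)\int_{\R^n_+}U^{\dst-1}V_p+\tfrac{c_n(n-2)}{2}\delta^2\mathbf H(p)\int_{\partial\R^n_+}U^{\dsh-1}V_p$, which cancels the linear-in-$V_p$ parts of the expansions of the $\mathfrak G$- and $\mathfrak F$-terms; \eqref{LEp} (through $\mathtt E_p$) intervenes only at order $\delta^4$, where $-\delta^4\int_{\R^n_+}\mathtt E_pV_p$ combines with the quadratic $V_p$-contributions to produce the quadratic form in \eqref{bp1}. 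Finally, in the constant case the assembly of the $\delta^4$-coefficient is not mere bookkeeping: the paper needs the identity $\partial^2_{nn}k_{\tilde g_p}=-2R_{ninj,ij}-2R^2_{nins}$, the vanishing of the term with $\tfrac1{20}\bar R_{ikj\ell,mp}+\tfrac1{15}\bar R_{iks\ell}\bar R_{jmsp}$, and the explicit evaluation of the integrals via $\varphi_m,\hat\varphi_m,\tilde\varphi_m$ showing that the $R_{ninj,ij}$-contributions cancel while the $R^2_{nins}$-contribution gives $-\mathtt S$; none of these steps is identified in your proposal, so the constant-case statement with \eqref{bp1} is not actually reached.
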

\begin{proof}
We have
$$\begin{aligned}
J_{\e, g}(\tilde \Theta_{\delta, p})&=\tilde J_{\e, \tilde g_p}(\Theta_{\delta, p})=\frac{c_n}{2}\int_M|\nabla_{\tilde g_p}\Theta_{\delta, p}|^2\, d\nu_{\tilde g_p}+\frac 12 \int_M k_{\tilde g_p}\Theta_{\delta, p}^2\, d\nu_{\tilde g_p}+(n-1)\e\int_{\partial M}\Lambda_p^{-\frac{2}{n-2}}\gamma\Theta_{\delta, p}^2\, d\sigma_{\tilde g_p}\\
&-\frac{c_n(n-2)}{2}\int_{\partial M}\mathbf H\left(\mathfrak F(\Theta_{\delta, p})-\mathfrak F(W_{\delta, p})\right)\, d\nu_{\tilde g_p}-\frac{c_n(n-2)}{2}\int_{\partial M} \mathbf H\mathfrak F(W_{\delta, p})\, d\sigma_{\tilde g_p}\\
&+(n-1)\int_{\partial M}h_{\tilde g_p}\Theta_{\delta, p}^2\, d\sigma_{\tilde g_p}-\int_M \mathbf K \left(\mathfrak G(\Theta_{\delta, p})-\mathfrak G(W_{\delta, p})\right)\, d\nu_{\tilde g_p}-\int_M \mathbf K \mathfrak G(W_{\delta, p})\, d\nu_{\tilde g_p}\\
&=A_1+A_2+A_3+A_4+A_5+A_6+A_7+A_8\end{aligned}$$
Now, by \eqref{hij} we get
$$\begin{aligned} A_6&=(n-1)\delta^4 \underbrace{\int_{\mathbb R^{n-1}}\partial^3_{ijk}h_{\tilde g_p}(\tilde x, 0)\tilde x_i\tilde x_j\tilde x_k U^2(\tilde x, 0)\, d\tilde x}_{:=0\, \hbox{\tiny{by symmetry}}} +\mathcal O(\delta^5)=\mathcal O(\delta^5).\end{aligned}$$
By \eqref{Sg} we get
$$\begin{aligned} A_2&=\frac 12 \delta^4\int_{\mathbb R^n_+}\partial^2_{ab}k_{\tilde g_p}(p)x_ax_b U^2(x)\, dx +\mathcal O(\delta^5)\\
&=\frac 14 \delta^4\left[\int_{\mathbb R^n_+}\partial^2_{ii}k_{\tilde g_p}\frac{|\tilde x|^2 U^2(\tilde x, x_n)}{n-1}\, d\tilde x \, d x_n+\partial^2_{nn}k_{\tilde g_p}\int_{\mathbb R^n_+}x_n^2 U^2(\tilde x, x_n)\, d\tilde x\, d x_n\right]+\mathcal O(\delta^5)\\
&=-\frac{1}{24(n-1)}\delta^4|\overline{{\rm Weyl}_g}(p)|^2 \int_{\mathbb R^n_+}|\tilde x|^2 U^2(\tilde x, x_n)\, d\tilde x \, d x_n+\frac{1}{4}\delta^4\partial^2_{nn}k_{\tilde g_p}\int_{\mathbb R^n_+}x_n^2 U^2(\tilde x, x_n)\, d\tilde x\, d x_n+\mathcal O(\delta^5)\\
\end{aligned}$$
Analogously we have, since $\Lambda_p(p)=1$ and $\nabla\Lambda_p(p)=0$ that
$$A_3=(n-1) \e\gamma(p) \delta \int_{\mathbb R^{n-1}}U^2(\tilde x, 0)\, d\tilde x +\mathcal O(\e\delta^3).$$
Now, using the fact that $p$ is a non-degenerate critical point of $\mathbf H$ when $\mathbf H$ is not constant
$$\begin{aligned}A_5&=-\frac{c_n(n-2)}{2}\int_{\mathbb R^{n-1}}\mathbf H(\delta\tilde x, 0)U^{\dsh}(\tilde x, 0)\, d\tilde x+\mathcal O(\delta^5)\\
&=\left\{\begin{aligned} &-\frac{c_n(n-2)}{2}\left(\mathbf H(p)\int_{\mathbb R^{n-1}}U^\dsh(\tilde x, 0)\, d\tilde x +\frac{\delta^2}{2}\int_{\mathbb R^{n-1}}\langle D^2 \mathbf H(p)\tilde x, \tilde x\rangle U^\dsh(\tilde x, 0)\, d\tilde x\right)+\mathcal O(\delta^3)\, \hbox{\tiny{if}}\, \mathbf H \, \hbox{\tiny{is not constant}}\\
&-\frac{c_n(n-2)}{2}\mathbf H\int_{\mathbb R^{n-1}}U^\dsh(\tilde x, 0)\, d\tilde x+\mathcal O(\delta^5)\, \hbox{\tiny{if}}\, \mathbf H \, \hbox{\tiny{is constant}}\end{aligned}\right.\end{aligned}$$
Analougously, using the fact that $p$ is a non-degenerate critical point of $\mathbf K$ when $\mathbf K$ is not constant

$$\begin{aligned}A_8&=-\frac{1}{\dst}\int_{\mathbb R^{n}_+}\mathbf K(\delta x)U^{\dst}(x)\, dx+\mathcal O(\delta^5)\\
&=\left\{\begin{aligned} &-\frac{1}{\dst}\left(\mathbf K(p)\int_{\mathbb R^{n}_+}U^\dst(x)\, dx +\frac{\delta^2}{2}\int_{\mathbb R^{n}_+}\langle D^2 \mathbf K(p)x, x\rangle U^\dst(x)\, dx\right)+\mathcal O(\delta^3)\, \hbox{\tiny{if}}\,\mathbf K \, \hbox{\tiny{is not constant}}\\
&-\frac{1}{\dst}\mathbf K\int_{\mathbb R^{n}_+}U^\dst(x)\, dx+\mathcal O(\delta^5)\, \hbox{\tiny{if}}\,\mathbf K \, \hbox{\tiny{is constant}}\end{aligned}\right.\end{aligned}$$
For the term $A_4$, expanding twice by Taylor formula we get
$$\begin{aligned}A_4&=\left\{\begin{aligned}&-\frac{c_n(n-2)}{2}\mathbf H(p)\int_{\mathbb R^{n-1}}\left[\mathfrak F(U+\delta^2 V_p)-\mathfrak F(U)\right]\, d\tilde x +\mathcal O(\delta^4)\, \hbox{\tiny{if}}\, \mathbf H\, \hbox{\tiny{is not constant}}\\
&-\frac{c_n(n-2)}{2}\mathbf H\int_{\mathbb R^{n-1}}\left[\mathfrak F(U+\delta^2 V_p)-\mathfrak F(U)\right]\, d\tilde x +\mathcal O(\delta^5)\, \hbox{\tiny{if}}\, \mathbf H\, \hbox{\tiny{is constant}}\end{aligned}\right.\end{aligned}$$
Then, if $H$ is constant then
$$\begin{aligned}A_4&=-\frac{c_n(n-2)}{2}\mathbf H\delta^2 \int_{\mathbb R^{n-1}}U^{\dsh-1}(\tilde x, 0)V_p(\tilde x, 0)\, d\tilde x\\
&-\frac{c_n(n-2)(\dsh-1)}{4}\mathbf H\delta^4\int_{\mathbb R^{n-1}}U^{{\dsh-2}}(\tilde x, 0) V_p^2(\tilde x, 0)\, d\tilde x+\mathcal O(\delta^5)\end{aligned}$$
while if $\mathbf H$ is not constant then
$$A_4=-\frac{c_n(n-2)}{2}\mathbf H(p)\delta^2 \int_{\mathbb R^{n-1}}U^{\dsh-1}(\tilde x, 0)V_p(\tilde x, 0)\, d\tilde x+\mathcal O(\delta^4).$$
Analogously, if $\mathbf K$ is constant then
$$\begin{aligned}A_7&=-\mathbf K\delta^2 \int_{\mathbb R^{n}_+}U^{\dst-1}(x)V_p(x)\, dx-\frac{\dst-1}{2}\mathbf K\delta^4\int_{\mathbb R^{n}_+}U^{{\dst-2}}(x) V_p^2(x)\, dx+\mathcal O(\delta^5)\end{aligned}$$
while if $\mathbf K$ is not constant then
$$A_7=-\mathbf K(p)\delta^2 \int_{\mathbb R^{n}_+}U^{\dst-1}(x)V_p(x)\, dx+\mathcal O(\delta^4).$$
At the end we evaluate $A_1$.
First we have that
$$\begin{aligned}A_1&=\frac{c_n}{2}\int_M |\nabla_{\tilde g_p}W_{\delta, p}|^2\,d\nu_{\tilde g_p}+c_n\delta^2\int_M \nabla_{\tilde g_p}W_{\delta, p}\nabla_{\tilde g_p}V_{\delta, p}\,d\nu_{\tilde g_p}+\frac{c_n}{2}\delta^4\int_M |\nabla_{\tilde g_p}V_{\delta, p}|^2\,d\nu_{\tilde g_p}\\
&=L_1+L_2+L_3.\end{aligned}$$
By using \eqref{detg} and \eqref{gij} and integrating by parts we get
$$\begin{aligned} L_3&=\frac{c_n}{2}\delta^4\int_{\mathbb R^n_+}|\nabla V_p|^2\, dx +\mathcal O(\delta^5)\\
&=-\frac{c_n}{2}\delta^4\int_{\mathbb R^n_+}V_p\Delta V_p\, dx +\frac{c_n}{2}\delta^4\int_{\partial\mathbb R^n_+}V_p\frac{\partial}{\partial \nu}V_p\, d\tilde x+\mathcal O(\delta^5)\\
&=-\frac{c_n}{2}\delta^4\int_{\mathbb R^n_+}V_p\Delta V_p\, dx +\frac{c_n n}{4}\delta^4\int_{\mathbb R^{n-1}}\mathbf H(p)U^{\dsh-2}V_p^2\, d\tilde x+\mathcal O(\delta^5)\end{aligned}$$ while
$$\begin{aligned}L_2&=c_n\delta^2\int_{\mathbb R^n_+}\nabla U \nabla V_p\, dx \\
&+\delta^4\int_{\mathbb R^n_+}\left(\frac 1 3 \bar R_{ikj\ell}\tilde x_k\tilde x_\ell \partial_i U\partial_j V_p+R_{ninj}x_n^2\partial_i U\partial_j V_p\right)\, dx +\mathcal O(\delta^5)\\
&=-c_n\delta^2\int_{\mathbb R^n_+}\Delta U V_p\, dx +c_n\delta^2\int_{\mathbb R^{n-1}}\frac{\partial}{\partial\nu}UV_p\, d\tilde x\\
&+\delta^4\int_{\mathbb R^n_+}\left(\frac 1 3 \bar R_{ikj\ell}\tilde x_k\tilde x_\ell \partial_i U\partial_j V_p+R_{ninj}x_n^2\partial_i U\partial_j V_p\right)\, dx +\mathcal O(\delta^5)\\
&=\delta^2\int_{\mathbb R^n_+}\mathbf K(p) U^{\dst-1}V_p\, dx +\frac{c_n(n-2)}{2}\delta^2\int_{\mathbb R^{n-1}}\mathbf H(p) U^{\dsh-1}V_p\, d\tilde x\\
&+\delta^4\underbrace{\int_{\mathbb R^n_+}\left(\frac 1 3 \bar R_{ikj\ell}\tilde x_k\tilde x_\ell \partial_i U\partial_j V_p+R_{ninj}x_n^2\partial_i U\partial_j V_p\right)\, dx}_{:=L_2^1} +\mathcal O(\delta^5)
\end{aligned}$$
Integrating by parts we get
$$\begin{aligned}L_2^1&=\underbrace{\int_{\partial\mathbb R^n_+}\left(\frac 1 3 \bar R_{ikj\ell}\tilde x_k\tilde x_\ell+R_{ninj}x_n^2\right)V_p\partial_i U\nu_j}_{:=0\, \hbox{\tiny{since}}\, \nu_j=0\, j=1, \ldots n-1}-\int_{\mathbb R^n_+}\underbrace{\left(\frac 1 3 \bar R_{ikj\ell}\tilde x_k\tilde x_\ell+R_{ninj}x_n^2\right)\partial^2_{ij}U}_{:=\mathtt E_p}V_p\\
&-\int_{\mathbb R^n_+}\partial_j \left(\frac 1 3 \bar R_{ikj\ell}\tilde x_k\tilde x_\ell+R_{ninj}x_n^2\right)\partial_i U V_p\\
&=-\int_{\mathbb R^n_+}\mathtt E_p V_p-\frac 13\bar R_{i\ell}\int_{\mathbb R^n_+}\tilde x_\ell \partial_i U V_p-\frac 13 \bar R_{ikjj}\int_{\mathbb R^n_+}\tilde x_l\partial_i U V_p\\
&=-\int_{\mathbb R^n_+}\mathtt E_p V_p\end{aligned}$$
by using the symmetries of the curvature tensor and \eqref{Rij}. Hence
$$\begin{aligned}L_2&=\delta^2\int_{\mathbb R^n_+}\mathbf K(p) U^{\dst-1}V_p\, dx +\frac{c_n(n-2)}{2}\delta^2\int_{\mathbb R^{n-1}}\mathbf H(p) U^{\dsh-1}V_p\, d\tilde x-\delta^4\int_{\mathbb R^n_+}\mathtt E_p V_p+\mathcal O(\delta^5)\\
\end{aligned}$$ 
Finally, by \eqref{detg}, \eqref{gij} and since the terms of odd degree disappear by symmetry we get
$$\begin{aligned}L_1&=\frac{c_n}{2}\int_{\mathbb R^n_+}|\nabla U|^2+\frac{c_n}{2}\delta^2\int_{\mathbb R^n_+}\left(\frac 1 3 \bar R_{ikj\ell}\tilde x_k\tilde x_\ell+R_{ninj}x_n^2\right)\partial_i U\partial_j U\\
&+\frac{c_n}{2}\delta^4\int_{\mathbb R^n_+}\left(\frac{1}{20}\bar R_{ikj\ell, mp}+\frac{1}{15}\bar R_{iks\ell}\bar R_{jmsp}\right)\tilde x_k\tilde x_\ell \tilde x_m \tilde x_p \partial_i U \partial_j U\\
&+\frac{c_n}{2}\delta^4\underbrace{\int_{\mathbb R^n_+}\left(\frac 12 R_{ninj, k\ell}+\frac 13 {\rm Sym}_{ij}(\bar R_{iks\ell}R_{nsnj})\right)x_n^2\tilde x_k \tilde x_\ell \partial_i U \partial_j U}_{G_1}\\
&+\frac{c_n}{2}\delta^4\underbrace{\int_{\mathbb R^n_+}\left(\frac 13 R_{ninj, nk}x_n^3\tilde x_k+\frac{1}{12}(R_{ninj, nn}+8R_{nins}R_{nsnj})x_n^4\right)\partial_i U \partial_j U}_{:=G_2}\\
&+\mathcal O(\delta^5).\end{aligned}$$
 Reasoning as in the proof of Lemma 6 in \cite{GMP} one can show that all the terms of order $\delta^2$ vanish.\\ Moreover, by the symmetries of the curvature tensor (see \cite{Marques}, page 1614 formula C) we get
$$\int_{\mathbb R^n_+}\left(\frac{1}{20}\bar R_{ikj\ell, mp}+\frac{1}{15}\bar R_{iks\ell}\bar R_{jmsp}\right)\tilde x_k\tilde x_\ell\tilde x_m\tilde x_p\partial_i U\partial_j U=0.$$
Moreover
$$\begin{aligned} G_2&=\frac{\alpha_n^2(n-2)^2}{12(n-1)|\mathbf K(p)|^{\frac{n-2}{2}}}\int_{\mathbb R^n_+}\left(\underbrace{R_{nini,nn}}_{:=R_{nn,nn=-2R_{nins}^2}}+8\underbrace{R_{nins}R_{nsni}}_{:=R_{nins}^2}\right)\frac{x_n^4|\tilde x|^2}{(|\tilde x|^2+(x_n+\D(p)^2-1)^n}\, dx\\
&=\frac{\alpha_n^2(n-2)^2}{2(n-1)|\mathbf K(p)|^{\frac{n-2}{2}}}R_{nins}^2\int_{\mathbb R^n_+}\frac{x_n^4|\tilde x|^2}{(|\tilde x|^2+(x_n+\D(p)^2-1)^n}\, dx
\end{aligned}$$ 
It remains to estimate $G_1$. By symmetry reasons we have only to consider the cases $i=j=k=\ell$, $i=j\neq k=\ell$, $i=k\neq j=\ell$ and $i=\ell\neq j=k$. Then the Symbol term gives no contribution. 
Hence $$\begin{aligned}G_1&=\frac{\alpha_n^2(n-2)^2}{|\mathbf K(p)|^{\frac{n-2}{2}}}\int_{\mathbb R^n_+}R_{ninj, k\ell}\frac{x_n^2\tilde x_k\tilde x_\ell\tilde x_i\tilde x_j}{(|\tilde x|^2+(x_n+\D)^2-1)^n}\, d\tilde x d x_n\\
&=\frac{\alpha_n^2(n-2)^2}{|\mathbf K(p)|^{\frac{n-2}{2}}}\sum_i R_{nini,ii}\int_{\mathbb R^n_+}\frac{x_n^2\tilde x_i^4}{(|\tilde x|^2+(x_n+\D)^2-1)^n}\, d\tilde x d x_n\\
&+\left(\sum_{i\neq k}R_{nini,kk}+\sum_{i\neq j}R_{ninj,ij}+\sum_{i\neq j}R_{ninj,ji}\right)\int_{\mathbb R^n_+}\frac{x_n^2\tilde x_i^2\tilde x_j^2}{(|\tilde x|^2+(x_n+\D)^2-1)^n}\\
&=\frac{\alpha_n^2(n-2)^2}{|\mathbf K(p)|^{\frac{n-2}{2}}}\left[\sum_i R_{nini,ii}+\frac 13\left(\sum_{i\neq k}R_{nini,kk}+\sum_{i\neq j}R_{ninj,ij}+\sum_{i\neq j}R_{ninj,ji}\right)\right]\int_{\mathbb R^n_+}\frac{x_n^2\tilde x_i^4}{(|\tilde x|^2+(x_n+\D)^2-1)^n}\\
&=\frac{\alpha_n^2(n-2)^2}{3|\mathbf K(p)|^{\frac{n-2}{2}}}\left[3\sum_i R_{nini,ii}+\sum_{i\neq k}R_{nini,kk}+\sum_{i\neq j}R_{ninj,ij}+\sum_{i\neq j}R_{ninj,ji}\right]\int_{\mathbb R^n_+}\frac{x_n^2\tilde x_i^4}{(|\tilde x|^2+(x_n+\D)^2-1)^n}\\
&=\frac{\alpha_n^2(n-2)^2}{(n^2-1)|\mathbf K(p)|^{\frac{n-2}{2}}}\left[3\sum_i R_{nini,ii}+\sum_{i\neq k}R_{nini,kk}+\sum_{i\neq j}R_{ninj,ij}+\sum_{i\neq j}R_{ninj,ji}\right]\int_{\mathbb R^n_+}\frac{x_n^2|\tilde x|^4}{(|\tilde x|^2+(x_n+\D)^2-1)^n}\\
\end{aligned}$$
Here we have used the fact that (see \cite{Marques})
$$\int_{\mathbb R^n_+}\frac{x_n^2\tilde x_i^2\tilde x_j^2}{(|\tilde x|^2+(x_n+\D)^2-1)^n}\, d\tilde x d x_n=\frac 13 \int_{\mathbb R^n_+}\frac{x_n^2\tilde x_i^4}{(|\tilde x|^2+(x_n+\D)^2-1)^n}\, d\tilde x d x_n$$
and
$$\int_{\mathbb R^n_+}\frac{x_n^2\tilde x_i^4}{(|\tilde x|^2+(x_n+\D)^2-1)^n}\, d\tilde x d x_n=\frac{3}{n^2-1}\int_{\mathbb R^n_+}\frac{x_n^2|\tilde x|^4}{(|\tilde x|^2+(x_n+\D)^2-1)^n}\, d\tilde x d x_n.$$
At this point we have also that $R_{nn, kk}=0$ for all $k=1, \ldots, n-1$ (see Proposition 3.2 of \cite{Marques}). Then, at then end, we get
$$G_1=\frac{\alpha_n^2(n-2)^2}{(n^2-1)|\mathbf K(p)|^{\frac{n-2}{2}}}R_{ninj,ij}\int_{\mathbb R^n_+}\frac{x_n^2|\tilde x|^4}{(|\tilde x|^2+(x_n+\D)^2-1)^n}$$
Collecting all the estimates we get that
$$\begin{aligned}L_1&=\frac{c_n}{2}\int_{\mathbb R^n_+}|\nabla U|^2+\\
&+ \frac{c_n\alpha_n^2(n-2)^2}{2(n-1)|\mathbf K(p)|^{\frac{n-2}{2}}}\delta^4\left[\frac 12 R^2_{nins}\int_{\mathbb R^n_+}\frac{x_n^4|\tilde x|^2}{(|\tilde x|^2+(x_n+\D(p)^2-1)^n}\, dx+\frac{1}{n+1}\int_{\mathbb R^n_+}\frac{x_n^2|\tilde x|^4}{(|\tilde x|^2+(x_n+\D)^2-1)^n}\right]\\&+\mathcal O(\delta^5).\end{aligned}$$
So, if $\mathbf K$ and $\mathbf H$ are not constants, then we remark again that the correction $V_p$ is not necessary and the reduced functional is (putting together the previous estimates and letting $\gamma=1$) 
$$J_{\e, g}(\tilde\Theta_{\delta, p})=\mathfrak E(p)+\mathtt A(p)\e \delta -\mathtt B(p)\delta^2+\mathcal O(\delta^3)+\mathcal O(\e\delta^3)$$
where $\mathfrak E(p)$ is the energy of the bubble evaluated in Lemma \ref{energiabubble} while
\begin{equation}\label{ap}\mathtt A(p):=(n-1)\int_{\mathbb R^{n-1}}U^2(\tilde x, 0)\, d\tilde x\end{equation}
and \begin{equation}\label{bp}\mathtt B(p):=\frac{c_n(n-2)}{4}\int_{\mathbb R^{n-1}}\langle D^2\mathbf H(p)\tilde x, \tilde x\rangle U^\dsh(\tilde x, 0)\, d\tilde x +\frac{1}{2\cdot\dst}\int_{\mathbb R^n_+}\langle D^2\mathbf K(p)\tilde x, \tilde x\rangle U^\dst.\end{equation}
If, instead, $\mathbf  H$ and $\mathbf  K$ are constants and using the identity $$\partial^2_{nn}k_{\tilde g_p}=-2R_{ninj,ij}-2R^2_{ninj}$$ then we have that
$$\begin{aligned}
&J_{\e, g}(\tilde\Theta_{\delta, g})=\mathfrak E+\frac 12 \delta^4\int_{\mathbb R^n_+}\left(c_n\Delta V_p +(\dst-1)\mathbf K(p)U^{\dst-2}V_p\right)V_p\, dx+(n-1)\gamma(p)\e\delta\int_{\mathbb R^{n-1}}U^2(\tilde x, 0)\, d\tilde x\\
&-\delta^4\frac{1}{24(n-1)}|\overline{{\rm Weyl}_g}(p)|^2 \int_{\mathbb R^n_+}|\tilde x|^2 U^2(\tilde x, x_n)\, d\tilde x+\frac 14 \delta^4 \partial^2_{nn}k_{\tilde g}\int_{\mathbb R^n_+}x_n^2U^2\, dx + \delta^4\frac{c_n\alpha_n^2(n-2)^2}{2(n-1)|\mathbf K(p)|^{\frac{n-2}{2}}}\times\\
&\times\left(\frac 12 R^2_{nins}\int_{\mathbb R^n_+}\frac{x_n^4|\tilde x|^2}{(|\tilde x|^2+(x_n+\D(p)^2-1)^n}\, dx+\frac{1}{n+1}R_{ninj,ij}\int_{\mathbb R^n_+}\frac{x_n^2|\tilde x|^4}{(|\tilde x|^2+(x_n+\D)^2-1)^n}\right)\\
&+\mathcal O(\delta^5)\\
&=\mathfrak E+\frac 12 \delta^4\int_{\mathbb R^n_+}\left(c_n\Delta V_p +(\dst-1)\mathbf K(p)U^{\dst-2}V_p\right)V_p\, dx+(n-1)\gamma(p)\e\delta\int_{\mathbb R^{n-1}}U^2(\tilde x, 0)\, d\tilde x\\
&-\delta^4\frac{1}{24(n-1)}|\overline{{\rm Weyl}_g}(p)|^2 \int_{\mathbb R^n_+}|\tilde x|^2 U^2(\tilde x, x_n)\, d\tilde x\\
&+\delta^4 R^2_{nins}\underbrace{\left(\frac{c_n\alpha_n^2(n-2)^2}{4(n-1)|\mathbf K(p)|^{\frac{n-2}{2}}}\int_{\mathbb R^n_+}\frac{x_n^4|\tilde x|^2}{(|\tilde x|^2+(x_n+\D(p)^2-1)^n}\, dx-\frac 12 \int_{\mathbb R^n_+}x_n^2U^2\, dx\right)}_{(I_1)}\\
&+\delta^4R_{ninj,ij}\underbrace{\left( \frac{c_n\alpha_n^2(n-2)^2}{2(n^2-1)|\mathbf K(p)|^{\frac{n-2}{2}}}\int_{\mathbb R^n_+}\frac{x_n^2|\tilde x|^4}{(|\tilde x|^2+(x_n+\D)^2-1)^n}-\frac 12 \int_{\mathbb R^n_+}x_n^2U^2\, dx\right)}_{(I_2)}
\end{aligned}$$
First we remark that by simply evaluate
$$\begin{aligned}
\int_{\mathbb R^n_+}x_n^2U^2\, dx&=\frac{\alpha_n^2}{|\mathbf K|^{\frac{n-2}{2}}}\int_0^{+\infty}\int_{\mathbb R^{n-1}}\frac{x_n^2}{\left(|\tilde x|^2+(x_n+\D)^2-1\right)^{n-2}}\, d\tilde x\, dx_n\\
&=\omega_{n-1}\frac{\alpha_n^2}{|\mathbf K|^{\frac{n-2}{2}}}\int_{\D}^{+\infty}\frac{(t-\D)^2}{(t^2-1)^{\frac{n-3}{2}}}\, dtI_{n-2}^{n-2}\\
&=\omega_{n-1}\frac{\alpha_n^2}{|\mathbf K|^{\frac{n-2}{2}}}\hat\varphi_{\frac{n-3}{2}}\frac{4(n-2)}{n+1}I_n^{n+2}.\end{aligned}$$
Instead 
$$\begin{aligned}
\int_{\mathbb R^n_+}\frac{x_n^2|\tilde x|^4}{(|\tilde x|^2+(x_n+\D(p)^2-1)^n}\, dx&=\omega_{n-1}I^{n+2}_n \int_{\D}^{+\infty}\frac{(t-\D)^2}{(t^2-1)^{\frac{n-3}{2}}}\, dt =\omega_{n-1}I^{n+2}_{n}\hat\varphi_{\frac{n-3}{2}}.\end{aligned}$$ At the end
$$\begin{aligned}
\int_{\mathbb R^n_+}\frac{x_n^4|\tilde x|^2}{(|\tilde x|^2+(x_n+\D(p)^2-1)^n}\, dx=\omega_{n-1}\frac{n-3}{n+1} I_n^{n+2}\tilde\varphi_{\frac{n-1}{2}}.\end{aligned}$$ An integration by parts shows that
$$\tilde\varphi_{\frac{n-1}{2}}=\frac{3}{n-3}\hat\varphi_{\frac{n-3}{2}}-\D\int_{\D}^{+\infty}\frac{(t-\D)^3}{(t^2-1)^{\frac{n-1}{2}}}.$$
Then
\begin{equation}\label{s}\begin{aligned}(I_1)&:=\frac{\alpha_n^2}{|\mathbf K|^{\frac{n-2}{2}}}\omega_{n-1}\frac{n-2}{n+1}I^{n+2}_n \left((n-3)\tilde\varphi_{\frac{n-1}{2}}-4\hat\varphi_{\frac{n-3}{2}}\right)\\
&=\frac{\alpha_n^2}{|\mathbf K|^{\frac{n-2}{2}}}\omega_{n-1}\frac{n-2}{n+1}I^{n+2}_n \left(-\hat\varphi_{\frac{n-3}{2}}-(n-3)\D\int_{\D}^{+\infty}\frac{(t-\D)^3}{(t^2-1)^{\frac{n-1}{2}}}\right)\\
&=-\mathtt S<0\end{aligned}\end{equation}
while
$$\begin{aligned}(I_2)&:=0\end{aligned}$$
Then

$$J_{\e, g}(\tilde\Theta_{\delta, g})=\mathfrak E+\mathtt A\gamma(p)\e\delta  -\delta^4\mathtt B(p)+\mathcal O(\delta^5)$$ where
$\mathfrak E$ is the energy of the bubble that does not depend on the point $p$, $\mathtt A\equiv \mathtt A(p)$ is defined as in \eqref{ap}, while now
\begin{equation}\label{bp1}\begin{aligned}\mathtt B(p)&:=-\frac 12 \int_{\mathbb R^n_+}\left(c_n\Delta V_p +(\dst-1)\mathbf K(p)U^{\dst-2}V_p\right)V_p\, dx \\&+\left(\frac{1}{24(n-1)}|\overline{{\rm Weyl}_g}(p)|^2 \int_{\mathbb R^n_+}|\tilde x|^2 U^2(\tilde x, x_n)\, d\tilde x+ R^2_{nins}\mathtt S\right)\end{aligned}\end{equation}

 \end{proof}
Now we are ready to prove Theorem \ref{principale}. 
\begin{proof}[Proof of Theorem \ref{principale}]
If $\mathbf K$ and $\mathbf H$ are constants we let $\delta=d\e^{\frac 13}$, $d\in [\alpha,\beta]\subset (0, +\infty)$.\\ By summarizing the previous results we have that $$J_{\e, g}\left(\tilde\Theta_{d\e^{\frac 13}, p}+\tilde\Phi\right)=\mathfrak E +\e^{\frac 4 3}\left(\mathtt A \gamma(p) d-d^4 \mathtt B(p)\right)+\mathcal O(\e^{\frac 53})$$ $C^0-$ uniformly for $p\in\partial M$, $d\in [\alpha,\beta]$, where $\mathtt A, \mathtt B(p), \mathfrak E$ are defined in Lemma \ref{redf}.\\ Now we let the reduced functional $$\mathcal F_\e(d, p)=J_{\e, g}\left(\tilde\Theta_{d\e^{\frac 13}, p}+\tilde\Phi\right).$$ It is standard to show that if $(\bar d, \bar p)\in (0, +\infty)\times \partial M$ is a critical point of the $\mathcal F_\e(d, p)$ then $\tilde\Theta_{d\e^{\frac 13}, p}+\tilde\Phi$ is a solution of \eqref{pb1}.\\ We let now $$\mathcal G(d, p)=\mathtt A \gamma(p) d-d^4 \mathtt B(p)$$ where $\mathtt A>0$ while $\mathtt B(p)>0$ by the hypothesis of Theorem \ref{principale}.\\ Then, one can check that there exist $0<\alpha<\beta$ such that any critical point $(d, p)\in (0, +\infty)\times \partial M$ of $\mathcal G$ lies in $(\alpha, \beta)\times \partial M$ because $$\frac{\partial\mathcal G}{\partial d}=\mathtt A \gamma(p)-4d^3\mathtt B(p)$$ and $$ \frac{\partial\mathcal G}{\partial d}(d, p)=0 \quad \mbox{if and only if}\quad d^3=\frac{\gamma(p)}{\mathtt B(p)}>0.$$ Moreover for any $L<0$ there exists $\bar d>0$ such that $\mathcal G(d, p)<L$ for any $d>\bar d$ and for any $p\in\partial M$.\\ Then there exists a maximum point $(d_0, p_0)\in (\alpha,\beta)\times \partial M$ which is $C^0-$ stable.\\\\
If, instead $\mathbf H$ and $\mathbf K$ are not constants then we let $\delta=d\e$, $d\in [\alpha,\beta]\subset (0, +\infty)$.\\ By summarizing the previous results we have that $$J_{\e, g}\left(\tilde\Theta_{d\e, p}+\tilde\Phi\right)=\mathfrak E(p) +\e^{2}\left(\mathtt A(p) d-d^2 \mathtt B(p)\right)+\mathcal O(\e^{3})$$ $C^0-$ uniformly for $p\in\partial M$, $d\in [\alpha,\beta]$, where $\mathtt A(p), \mathtt B(p), \mathfrak E(p)$ are defined in Lemma \ref{redf}.\\ We again define the reduced functional $$\mathcal F_\e(d, p)=J_{\e, g}\left(\tilde\Theta_{d\e, p}+\tilde\Phi\right).$$ Now we set $\mathtt G_p(d)=d \mathtt A(p)-d^2 \mathtt B(p)$. Let
$p_0\in\partial M$ be a non-degenerate minimum point of
$H$ and
$K$ in the sense of the assumption ${\rm (Hyp)_2}$. \\ By Lemma \ref{energiabubble}, it is easy to see that
$p_0$ is a non-degenerate maximum point of
$\mathfrak E(p)$.

 Hence, there is a
$\sigma_1-$ neighbourhood of
$p_0$, say
$\mathcal U_{\sigma_1}\subset \partial M$, such that for any sufficiently small
$\gamma>0$ \begin{equation}\label{Ep}\mathfrak E(p)\leq \mathfrak E(p_0)-\gamma\quad \forall\,\, p\in \partial U_{\sigma_1}.\end{equation} Now we see that
\begin{equation}\label{choiced} d_0:=\frac{\mathtt A(p_0)}{2\mathtt B(p_0)}\end{equation}
is a strictly maximum point of the function
$\mathtt G_{p_0}(d)$. Then there is an open interval
$I_{\sigma_2}$ such that
$\bar I_{\sigma_2}\subset \mathbb R^+$ and 
\begin{equation}\label{Gp}
\mathtt G_{p_0}(d)\leq \mathtt G_{p_0}(d_0)-\gamma\quad \forall\,\, d\in\partial I_{\sigma_2}.\end{equation}
Let us set
$\mathcal K:=\overline{\mathcal U_{\sigma_1}\times I_{\sigma_2}}$ and let
$\eta>0$ be small enough so that
$\mathcal K\subset \mathcal U_{\sigma_1}\times 
  ( \eta, \frac 1 \eta
  ) $. Since the reduced functional is continuous on
$\mathcal K$ then, by Weierstrass Theorem it follows that it has a global maximum point in
$\mathcal K$. Let
$(p_\e, d_\e)$ such point. We want to show that it is in the interior of
$\mathcal K$.

By contradiction suppose that the point
$(p_{\e}, d_{\e})\in\partial \mathcal K$. There are two possibilities: \begin{itemize}\item[(a)]
$p_{\e}\in\partial\mathcal U_{\sigma_1}$,
$d_{\e}\in \bar I_{\sigma_2}$ \item[(b)] 
$p_{\e}\in\mathcal U_{\sigma_1}$,
$d_{\e}\in \partial I_{\sigma_2}$.\end{itemize}

If (a) holds, by using the fact that $(p_\e, d_\e)$ is a maximum point for $\mathcal F_\e$, Lemma \ref{redf} and \eqref{Ep}
 we have 
\begin{equation*} 
0\leq \mathcal F_\e(p_\e, d_\e)-
\mathcal F_\e(p_0, d_\e)=
\mathfrak E(p_\e)-\mathfrak E(p_0)
+\mathcal O(\e^2)\leq -\gamma 
+\mathcal O(\e^2)<0
\end{equation*} 
for 
$\e$ sufficiently small, 
which is a contradiction.

If now (b) holds, then by using 
Lemma \ref{redf}, again the fact that
$(p_\e, d_\e)$ is a maximum point for
$\mathcal F_\e$ and \eqref{Gp},
 we have 
\begin{align}
\label{jp1}
0 \leq \mathcal F_\e(p_\e, d_\e)-\mathcal F_\e(p_\e, d_0)
&
=\e^2
\big ( \mathtt G_{p_\e}(d_\e)-\mathtt G_{p_\e}(d_0)+o(1)
\big ) 
\leq -\gamma\e^2+o(\e^2)<0 
\end{align} 
for any
$\e$ sufficiently small which is again a contradiction.

 It remains to show that
$(p_\e, d_\e)\to (p_0, d_0)$ as
$\e\to 0$. Indeed,
by using the fact that
$(p_\e, d_\e)$ is a maximum point for
$\mathcal F_\e$ and Lemma \ref{redf} we get
$$\mathcal F_\e(p_0, d_\e)\leq \mathcal F_\e(p_\e, d_\e) \quad\iff\quad \mathfrak E(p_0)\leq \mathfrak E(p_\e).$$ Moreover by \eqref{Ep}
$$\mathfrak E(p_\e)\leq \mathfrak E(p_0)$$ and hence, passing to the limit it follows
$$\lim_{\e\to 0}\mathfrak E(p_\e)=\mathfrak E(p_0).$$ Up to a subsequence, since
$p_\e$ is a local maximum for
$\mathfrak E$ it follows that
$p_\e\to p_0$.

In the same way one can show that
$d_\e\to d_0$ as
$\e\to 0$.

\end{proof}

\end{document}